   \newcommand\Aut{\mathrm{Aut}}
\newcommand\C{\mathrm{C}}  \newcommand\calP{\mathcal{P}} \newcommand\Cay{\mathrm{Cay}} \newcommand\Cen{\mathbf{C}}  \newcommand\Cos{\mathsf{Cos}}
 \newcommand\De{\mathit{\Delta}}
 \newcommand\Ga{\Gamma}
\newcommand\K{\mathrm{K}}
 \newcommand\Nor{\mathbf{N}}
\newcommand\Si{\Sigma}        \newcommand\Sy{\mathrm{S}} \newcommand\Sym{\mathrm{Sym}} 
 \newcommand\ZZ{\mathbb{Z}}
\newtheorem{theorem}{Theorem}[section]
\newtheorem{lemma}[theorem]{Lemma}
\newtheorem{proposition}[theorem]{Proposition}
\theoremstyle{definition}
\newtheorem{definition}[theorem]{Definition}
\begin{document}

\title[Arc-transitive circulant digraphs]{An explicit characterization of arc-transitive circulants}

\author[Li]{Cai Heng Li}
\address{Department of Mathematics and SUSTech International Center for Mathematics, Southern University of Science and Technology\\Shenzhen 518055, Guangdong\\P. R. China}
\email{lich@sustech.edu.cn}

\author[Xia]{Binzhou Xia}
\address{School of Mathematics and Statistics\\The University of Melbourne\\Parkville, VIC 3010\\Australia}
\email{binzhoux@unimelb.edu.au}

\author[Zhou]{Sanming Zhou}
\address{School of Mathematics and Statistics\\The University of Melbourne\\Parkville, VIC 3010\\Australia}
\email{sanming@unimelb.edu.au}


\begin{abstract}
A reductive characterization of arc-transitive circulants was given independently by Kov\'acs in 2004 and the first author in 2005.
In this paper, we give an explicit characterization of arc-transitive circulants and their automorphism groups.
Based on this, we give a proof of the fact that arc-transitive circulants are all CI-digraphs.

\textit{Key words: arc-transitive digraphs, circulants, CI-digraphs}

\textit{MSC2010: 05C25, 05C20, 20B25}
\end{abstract}

\maketitle

\section{Introduction}

Throughout this paper, a \emph{digraph} is an ordered pair $(V,A)$ with vertex set $V$ and arc set $A$, where $A$ is a set of ordered pairs of elements of $V$.
The cardinality of the vertex set is called the \emph{order} of the digraph.
A digraph $\Ga$ is said to be \emph{arc-transitive} if its automorphism group $\Aut(\Ga)$ acts transitively on the arc set.
A digraph $\Ga$ is called a \emph{circulant} if $\Aut(\Ga)$ has a finite cyclic subgroup that is regular on the vertex set, and is called a \emph{normal circulant} if $\Aut(\Ga)$ has a finite cyclic subgroup that is normal and regular on the vertex set.

Since the work of Chao and Wells~\cite{Chao1971,CW1973} in the 1970's, considerable effort has been made to characterize arc-transitive circulants in the literature.
Remarkable results were achieved under certain conditions such as $2$-arc-transitivity~\cite{ACMX1996,MW2000}, square-free order~\cite{LMM2001}, odd prime-power order~\cite{XBS2004}, small valency~\cite{ABDKM2018} and unit connection set~\cite{ABDKM2018}.
Based on Schur ring and permutation group techniques, a reductive characterization for connected arc-transitive non-normal circulants was obtained independently by Kov\'{a}cs~\cite{Kovacs2004} and the first author~\cite{Li2005}.
Recently, a classification of arc-transitive circulants that are $2$-distance-transitive was given in \cite{CJL2019}.
%

In this paper, we give an explicit characterization (Theorem~\ref{thm1}) of connected arc-transitive circulants, which reveals their structures and determines their automorphism groups.
In fact, Theorem~\ref{thm1} shows that a finite connected arc-transitive circulant can be decomposed into a normal circulant, some complete graphs and an edgeless graph by tensor and lexicographic products.

For digraphs $\Ga$ and $\Si$, their tensor product (direct product) is denoted by $\Ga\times\Si$, and their lexicographic product is denoted by $\Ga[\Si]$ (see Section~\ref{sec1} for the definitions of these products).
For a positive integer $n$, denote by $\Sy_n$ the symmetric group of degree $n$, and denote by $\K_n$ and $\overline\K_n$ the complete graph and the edgeless graph of order $n$, respectively.
Let $\C_4$ denote an undirected cycle of length 4.

\begin{theorem}\label{thm1}
For every connected arc-transitive circulant $\Ga$ of order $n$,
there exist a connected arc-transitive normal circulant $\Ga_0$ of order $n_0$ 
and positive integers $n_1,\dots,n_r,b$, where $r\geqslant0$, such that the following hold:
\begin{enumerate}[{\rm(1)}]
\item $\Ga_0\ncong\C_4$;
\item $n_i\geqslant4$ for $i=1,\dots,r$;
\item $n=n_0n_1\cdots n_rb$, and $n_0,n_1,\dots,n_r$ are pairwise coprime;
\item $\Ga\cong(\Ga_0\times\K_{n_1}\times\dots\times\K_{n_r})[\overline\K_b]$;
\item $\Aut(\Ga)\cong\Sy_b\wr(\Aut(\Ga_0)\times\Sy_{n_1}\times\dots\times\Sy_{n_r})$.
\end{enumerate}
\setcounter{equation}{5}
Moreover, $\Ga$ is uniquely determined by the triple $(\Ga_0,\{n_1,\dots,n_r\},b)$ satisfying the above conditions.
%
\end{theorem}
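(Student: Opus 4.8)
The plan is to argue by induction on the order $n$, taking as input the reductive characterization of connected arc-transitive non-normal circulants due to Kov\'acs~\cite{Kovacs2004} and the first author~\cite{Li2005}. That characterization reduces a connected arc-transitive circulant that is \emph{not} a normal circulant to a product of connected arc-transitive circulants of strictly smaller order, the two relevant operations being the lexicographic blow-up $\Si[\overline\K_b]$ with $b\geqslant2$ (which requires only $b\di n$) and the tensor product $\Si_1\times\Si_2$ of two factors of coprime orders greater than $1$. Iterating the reduction, the indecomposable building blocks are exactly the normal circulants and the complete graphs $\K_m$. The base case is a connected arc-transitive normal circulant $\Ga$: if $\Ga\ncong\C_4$ we set $\Ga_0=\Ga$, $r=0$ and $b=1$; if $\Ga\cong\C_4$ we instead use $\C_4\cong\K_2[\overline\K_2]$ together with the fact that $\K_2$ is a normal circulant with $\K_2\ncong\C_4$, taking $\Ga_0=\K_2$, $r=0$ and $b=2$, which is precisely the device that forces condition~(1).

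In the inductive step I would normalize the iterated product returned by the reduction into the canonical shape of part~(4). First I would move every edgeless lexicographic factor to the outside using the distributive identity $\Delta\times(\Si[\overline\K_b])\cong(\Delta\times\Si)[\overline\K_b]$, valid because $\overline\K_b$ carries no arcs, and amalgamate nested edgeless layers via $\Si[\overline\K_a][\overline\K_b]\cong\Si[\overline\K_{ab}]$; this collects all the blow-ups into a single outer $[\overline\K_b]$. What remains inside is a tensor product of indecomposable atoms of pairwise coprime orders. Here I would use the normal/non-normal dichotomy: the tensor product of two normal circulants of coprime orders is again a normal circulant, since its regular cyclic subgroup is the direct product of the two normal regular cyclic subgroups and is hence normal in $\Aut(\Si_1)\times\Aut(\Si_2)$; thus all normal atoms amalgamate into one connected arc-transitive normal circulant $\Ga_0$. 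A complete graph $\K_m$ is a normal circulant precisely when $m\leqslant3$, so those atoms of order at most $3$ are absorbed into $\Ga_0$, while the genuinely non-normal ones, of order $n_i\geqslant4$, survive as the tensor factors $\K_{n_1},\dots,\K_{n_r}$. This yields conditions~(1)--(4), with the pairwise coprimality of $n_0,n_1,\dots,n_r$ inherited from the reduction.

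For the automorphism group in part~(5) I would splice together two product theorems. Since the inner graph $\Ga_0\times\K_{n_1}\times\dots\times\K_{n_r}$ carries no further edgeless lexicographic factor (the outer $b$ having been made maximal), a Sabidussi-type wreath decomposition gives $\Aut(\Ga)\cong\Sy_b\wr\Aut(\Ga_0\times\K_{n_1}\times\dots\times\K_{n_r})$. For the tensor layer I would invoke the theorem that the automorphism group of a tensor product of connected arc-transitive circulants of pairwise coprime orders is the direct product of the factor automorphism groups; combined with $\Aut(\K_{n_i})=\Sy_{n_i}$ this identifies $\Aut(\Ga_0\times\K_{n_1}\times\dots\times\K_{n_r})$ with $\Aut(\Ga_0)\times\Sy_{n_1}\times\dots\times\Sy_{n_r}$, giving~(5). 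This same direct-product theorem is what was needed above to guarantee that the regular cyclic group of the tensor product is normal, so the two uses reinforce one another.

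Finally, for uniqueness I would characterize each ingredient intrinsically. The integer $b$ is the largest $b'$ for which $\Ga\cong\Si[\overline\K_{b'}]$ for some circulant $\Si$, equivalently the common size of the fibres of the block system exhibited by the base group of the wreath product in~(5); quotienting out this top layer leaves a circulant whose factorization into tensor-indecomposable, pairwise coprime connected arc-transitive circulants is unique. The dichotomy then pins down the normal core $\Ga_0$ as the unique normal-circulant factor, forced to be $\ncong\C_4$ by~(1), and the $\K_{n_i}$ as the non-normal atoms of order $\geqslant4$, so that both $\Ga_0$ and the multiset $\{n_1,\dots,n_r\}$ are determined. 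I expect the main obstacle to be exactly this automorphism-plus-uniqueness step: one must verify that the normalization introduces no spurious isomorphisms and, above all, that the tensor product admits no automorphism beyond the direct product and the blow-up none beyond the wreath product. This is precisely where the hypotheses $n_i\geqslant4$, the pairwise coprimality of $n_0,\dots,n_r$, and the exclusion $\Ga_0\ncong\C_4$ must be used to rule out the degenerate collapses such as $\C_4\cong\K_2[\overline\K_2]$ and any hidden symmetries of the complete-graph factors.
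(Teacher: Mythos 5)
Your existence step and, crucially, part~(5) rest on a black box that is false as stated: there is no theorem that the automorphism group of a tensor product of connected arc-transitive circulants of pairwise coprime orders is the direct product of the factors' automorphism groups. Counterexample: $\C_4$ and $\K_3$ are connected arc-transitive circulants of coprime orders $4$ and $3$, yet by Lemma~\ref{lem3} we have $\C_4\times\K_3\cong(\K_2[\overline\K_2])\times\K_3\cong(\K_2\times\K_3)[\overline\K_2]\cong\C_6[\overline\K_2]$, whose automorphism group contains $\Sy_2\wr\Aut(\C_6)$ of order $768$, while $\Aut(\C_4)\times\Sy_3$ has order $48$. Note that $\C_4$ is itself a \emph{normal} circulant, so this configuration arises exactly where you amalgamate the normal atoms into $\Ga_0$ (your $\C_4\cong\K_2[\overline\K_2]$ device is applied only in the base case, not after amalgamation). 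Any correct version of the product theorem needs $R$-thinness hypotheses, is not off the shelf for digraphs, and is essentially equivalent in strength to the statement you are trying to prove; you concede the circularity yourself when you note that the normality of the amalgamated $\Ga_0$ is justified by the same theorem. The paper runs the logic in the opposite direction and never computes $\Aut$ of a tensor product from its factors: choosing $b$ maximal and using Lemma~\ref{lem2}, it lands in case~(a) of Lemma~\ref{prop1} (the Schur-ring classification), which hands it the \emph{full} group $\Aut(\Ga_M)=X_0\times\Sy_{n_1}\times\dots\times\Sy_{n_r}$; Lemma~\ref{tensor} then converts this product action into the tensor decomposition via coset digraphs, and part~(5) follows from a kernel-of-block-action order count rather than a Sabidussi-type theorem.

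The second gap is uniqueness. You invoke ``unique factorization into tensor-indecomposable, pairwise coprime connected arc-transitive circulants,'' another unproven black box: unique prime factorization with respect to $\times$ fails for bipartite graphs and for digraphs in general, and the isomorphism $\C_4\times\K_3\cong(\K_2\times\K_3)[\overline\K_2]$ above already exhibits a collapse between $\times$ and lexicographic layers inside this very class. The paper's uniqueness argument uses three concrete ingredients, none of which appear in your sketch: Lemma~\ref{lem13}, a cancellation lemma showing that $\Ga_1[\overline\K_b]\cong\Ga_2[\overline\K_c]$ with $R$-thin loopless $\Ga_1,\Ga_2$ forces $b=c$ and $\Ga_1\cong\Ga_2$, where $R$-thinness of the inner tensor products comes from Lemma~\ref{lem5}(c) and is precisely where $\Ga_0\ncong\C_4$ enters; Lemma~\ref{lem14}, a purely group-theoretic comparison recovering $\{n_1,\dots,n_r\}$ from an abstract isomorphism $\Aut(\Ga_0)\times\Sy_{n_1}\times\dots\times\Sy_{n_r}\cong\Aut(\Si_0)\times\Sy_{m_1}\times\dots\times\Sy_{m_s}$, using that the automorphism group of a normal circulant is cyclic-by-abelian and hence has no normal subgroup isomorphic to $\Sy_n$ with $n\geqslant4$; and, decisively, the CI property (Theorem~\ref{thm3}) applied to $\Ga_1\cong\Ga_2$ to conclude $\Ga_0\cong\Si_0$, with Theorem~\ref{thm3} itself proved from the existence half (Proposition~\ref{Structural-thm}) together with Proposition~\ref{lem12}, that the automorphism group of a connected arc-transitive \emph{normal} circulant contains a unique regular cyclic subgroup. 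Your proposal has no analogue of this CI step, and without it the normal core $\Ga_0$ is not pinned down even if your factorization claims were granted.
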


In view of Theorem~\ref{thm1} we give the following definition.

\begin{definition}
For a connected arc-transitive circulant $\Ga$, a triple $(\Ga_0,\{n_1,\dots,n_r\},b)$ of a finite connected arc-transitive normal circulant $\Ga_0$, a (not necessarily nonempty) set $\{n_1,\dots,n_r\}$ of integers and a positive integer $b$ satisfying conditions~(1)--(5) in Theorem~\ref{thm1} is called a \emph{tensor-lexicographic decomposition} of $\Ga$.
\end{definition}

Here are some remarks on Theorem~\ref{thm1}:

\begin{enumerate}[{\rm(I)}]
\item
The single loop is connected and arc-transitive.
On the other hand, if a connected arc-transitive digraph has order at least two, then it has no loop due to the arc-transitivity.
Thus the circulant $\Ga_0$ in the statement of Theorem~\ref{thm1} is a single loop if $n_0=1$ and has no loop if $n_0\geqslant2$.
In the former case, the tensor product of $\Ga_0$ with any digraph is isomorphic to the digraph, and hence the digraph $\Ga$ described in part~(3) of Theorem~\ref{thm1} is isomorphic to $(\K_{n_1}\times\dots\times\K_{n_r})[\overline\K_b]$.
\item
For each arc-transitive circulant $\Ga_0$ of order $n_0$ and positive integers $n_1,\dots,n_r,b$ such that $n_0,n_1,\dots,n_r$ are pairwise coprime, the digraph $(\Ga_0\times\K_{n_1}\times\dots\times\K_{n_r})[\overline\K_b]$ is an arc-transitive circulant by Lemmas~\ref{lem4} and~\ref{lem10}. The condition $n_i\geqslant4$ as in~(2) of Theorem~\ref{thm1} ensures that $\K_{n_i}$ is a non-normal circulant. If $\Ga_0\cong\C_4$, then since $\C_4\cong\K_2[\overline{\K}_2]$, we deduce from Lemmas~\ref{lem3} and~\ref{lem2} that $(\C_4\times\K_{n_1}\times\dots\times\K_{n_r})[\overline\K_b]\cong(\K_2\times\K_{n_1}\times\dots\times\K_{n_r})[\overline\K_{2b}]$.
\item
To be precise, the statement ``$\Ga$ is uniquely determined by the triple $(\Ga_0,\{n_1,\dots,n_r\},b)$" means that if $(\Ga_0,\{n_1,\dots,n_r\},b)$ and $(\Si_0,\{m_1,\dots,m_s\},c)$ are two tensor-lexicographic decompositions of $\Ga$ then $\Ga_0\cong\Si_0$, $\{n_1,\dots,n_r\}=\{m_1,\dots,m_s\}$ and $b=c$. Theorem~\ref{thm1} shows that every connected arc-transitive circulant has a unique tensor-lexicographic decomposition.
\item
Further descriptions of $\Ga_0$ in Theorem~\ref{thm1} can be found in Section~\ref{sec4} as it is a connected arc-transitive normal circulant.
\item
For any connected arc-transitive circulant $\Ga$, the full automorphism group $\Aut(\Ga)$ is explicitly given by part~(5) of Theorem~\ref{thm1}. However, it is a challenging problem to identify arc-transitive subgroups of $\Aut(\Ga)$, which is equivalent to characterizing certain permutation groups that contain a regular cyclic subgroup; refer to \cite{LP2012}.
\item
Ignoring the orientations of arcs of the circulants, Theorem~\ref{thm1} gives a characterization of edge-transitive undirected circulants.
\item
The existence of tensor-lexicographic decomposition for connected arc-transitive circulants was obtained in~\cite[Theorem~4]{Kovacs2004} by the approach of Schur ring, while the uniqueness was not considered.
\end{enumerate}

Theorem~\ref{thm1} provides an essential tool to study finite arc-transitive circulants. In many situations, the tensor-lexicographic decomposition in Theorem~\ref{thm1} reduces a problem on arc-transitive circulants to the problem on normal ones. We shall illustrate this by verifying the following Theorem~\ref{thm3} on the CI-property of arc-transitive circulants, which was claimed in~\cite[Section~7.3]{Li2002} but did not have any published proof in the literature (to the best of the authors' knowledge).

Given a group $G$ and a nonempty subset $S$ of $G$, the \emph{Cayley digraph} of $G$ with \emph{connection set} $S$, denoted by $\Cay(G,S)$, is the digraph with vertex set $G$ such that a vertex $x$ points to a vertex $y$ if and only if $yx^{-1}\in S$.
It is well known that a digraph $\Ga=(V,A)$ is isomorphic to a Cayley digraph of a group $G$ if and only if
$\Aut(\Ga)$ contains a subgroup that is regular on $V$ and isomorphic to $G$.
Thus a circulant is precisely a Cayley digraph of a finite cyclic group up to isomorphism.
A Cayley digraph $\Cay(G,S)$ is said to be \emph{a CI-digraph} (with respect to $G$) if for each subset $T$ of $G$ with $\Cay(G,S)\cong\Cay(G,T)$ there exists $\alpha\in\Aut(G)$ such that $T=S^\alpha$.
A Cayley digraph of a cyclic group $G$ that is a CI-digraph with respect to $G$ is called a \emph{CI-circulant}.

\begin{theorem}\label{thm3}
Every finite connected arc-transitive circulant is a CI-circulant.
\end{theorem}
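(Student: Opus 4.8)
The plan is to use the tensor-lexicographic decomposition of Theorem~\ref{thm1} to reduce the CI-property of an arbitrary connected arc-transitive circulant $\Ga$ to that of its normal part $\Ga_0$, peeling off the complete-graph factors $\K_{n_i}$ and the edgeless factor $\overline\K_b$ one building block at a time. Writing $\Ga\cong(\Ga_0\times\K_{n_1}\times\dots\times\K_{n_r})[\overline\K_b]$ and identifying the vertex set with $\bbZ_n$, I would establish three reduction statements: (i) the complete graph $\K_{n_i}$ is a CI-circulant (immediate, since $\bbZ_{n_i}\setminus\{0\}$ is the only connection set producing $\K_{n_i}$, and it is fixed by every element of $\Aut(\bbZ_{n_i})$); (ii) the class of CI-circulants is closed under the lexicographic product with $\overline\K_b$; and (iii) a tensor product of CI-circulants of pairwise coprime orders is again a CI-circulant. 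Granting these, the theorem follows once the normal part $\Ga_0$ is shown to be a CI-circulant.

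For step (ii), the key observation is that the $b$ fibres of the lexicographic product form the block system of $\bbZ_n$ determined by the (unique, hence characteristic) subgroup $\bbZ_b\leqslant\bbZ_n$ of order $b$. Because $\overline\K_b$ has no arcs, the connection set $S$ of $\Si[\overline\K_b]$ is a union of cosets of this subgroup, i.e. the full preimage of the connection set $\overline S$ of $\Si$ under the projection $\pi\colon\bbZ_n\to\bbZ_{n/b}$. If $\Cay(\bbZ_n,S)\cong\Cay(\bbZ_n,T)$, then $\Cay(\bbZ_{n/b},\overline S)\cong\Cay(\bbZ_{n/b},\overline T)$, so the CI-property of $\Si$ supplies $\overline\alpha\in\Aut(\bbZ_{n/b})$ with $\overline S{}^{\overline\alpha}=\overline T$. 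Since the reduction map $\bbZ_n^*\to\bbZ_{n/b}^*$ is surjective, $\overline\alpha$ lifts to some $\alpha\in\Aut(\bbZ_n)$ (which then preserves $\bbZ_b$), and $S^\alpha=\pi^{-1}(\overline S{}^{\overline\alpha})=\pi^{-1}(\overline T)=T$, as required. For step (iii), pairwise coprimality gives $\bbZ_m\cong\bbZ_{n_0}\times\dots\times\bbZ_{n_r}$ (where $m=n_0\cdots n_r$) with each factor characteristic, so that $\Aut(\bbZ_m)=\Aut(\bbZ_{n_0})\times\dots\times\Aut(\bbZ_{n_r})$; the uniqueness part of Theorem~\ref{thm1} then forces the connection set of any circulant isomorphic to the tensor product to split as a product along this decomposition into pieces realising $\Ga_0$ and the $\K_{n_i}$, after which the factorwise CI-data assemble into a single $\alpha\in\Aut(\bbZ_m)$.

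The heart of the proof is the base case: a connected arc-transitive normal circulant $\Ga_0$ is a CI-circulant. Here I would invoke Babai's criterion, which reduces the CI-property to the assertion that all regular cyclic subgroups of $\Aut(\Ga_0)$ of order $n_0$ are conjugate. Since $\Ga_0$ is normal, its regular cyclic subgroup $R_0$ is normal in $\Aut(\Ga_0)$, whence the only conjugate of $R_0$ is $R_0$ itself; thus the criterion amounts to showing that $R_0$ is the \emph{unique} regular cyclic subgroup of $\Aut(\Ga_0)$. Supposing $C$ were a second one, the intersection $C\cap R_0$ is characteristic in $R_0$ and hence normal in $\Aut(\Ga_0)$, so its orbits form an $\Aut(\Ga_0)$-invariant block system; tracing through the induced quotient and the action of $C$ on the fibres would produce a nontrivial tensor-lexicographic decomposition of $\Ga_0$, contradicting the normality of $\Ga_0$ (equivalently, contradicting the explicit description of normal arc-transitive circulants in Section~\ref{sec4}). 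This rules out $C\neq R_0$ and completes the base case.

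The main obstacle is precisely this base case, i.e. excluding a second regular cyclic subgroup of $\Aut(\Ga_0)$: the small examples suggest that as soon as such a subgroup exists the digraph acquires a blow-up structure and ceases to be a normal circulant, and making this implication rigorous is where the structural results on normal circulants are indispensable. A secondary, bookkeeping obstacle is step (iii), namely verifying that the connection set of an arbitrary circulant isomorphic to the coprime tensor product genuinely factors along the Chinese-remainder decomposition of $\bbZ_m$; this is exactly the point at which the uniqueness half of Theorem~\ref{thm1} does the essential work.
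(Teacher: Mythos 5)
Your overall strategy (reduce via the tensor-lexicographic decomposition to the normal part, handle the normal part through Babai's criterion plus normality forcing a \emph{unique} regular cyclic subgroup) is in the right spirit, and your base-case target is exactly the paper's Proposition~\ref{lem12}. But there are two genuine gaps. The fatal one is step~(iii): you invoke ``the uniqueness part of Theorem~\ref{thm1}'' to force the connection set of any circulant isomorphic to the coprime tensor product to split along the Chinese-remainder decomposition. First, this is circular relative to the paper's development: the uniqueness half of Theorem~\ref{thm1} is proved in Section~\ref{sec3} \emph{using} Theorem~\ref{thm3} (the CI property is precisely what produces the automorphism $\varphi$ with $S^{\varphi_0}=T$ there). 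Second, and independently of how the paper organizes things, uniqueness is a statement about abstract isomorphism of the triple $(\Ga_0,\{n_1,\dots,n_r\},b)$; it does not say that an arbitrary connection set $U\subseteq\bbZ_m$ with $\Cay(\bbZ_m,U)\cong\Ga_0\times\K_{n_1}\times\dots\times\K_{n_r}$ factors as $U_0\times(\bbZ_{n_1}\setminus\{0\})\times\dots\times(\bbZ_{n_r}\setminus\{0\})$. That factorization is essentially equivalent to the CI property of the tensor product, i.e.\ you are assuming what is to be proved. The paper avoids this entirely by never factoring connection sets: it applies Babai's criterion (Lemma~\ref{prop4}) to $\Ga$ as a whole, projects an arbitrary regular cyclic subgroup $H$ and $\widehat{G}$ from $\Aut(\Ga)=\Sy_b\wr\Aut(\Si)$ down to $\Aut(\Si)=\Aut(\Ga_0)\times\Sy_{n_1}\times\dots\times\Sy_{n_r}$, splits both images factorwise by Lemma~\ref{lem11}, gets equality on the $\Aut(\Ga_0)$ factor from Proposition~\ref{lem12} and conjugacy of $n_i$-cycles in $\Sy_{n_i}$, and then lifts the conjugacy through the kernel $\Sy_b^m$ by an explicit cycle-matching argument. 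Your step~(iii) can only be repaired by carrying out this subgroup-level argument, at which point you have reproduced the paper's proof rather than an alternative route.

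The second gap is your base case, which you correctly identify as the heart of the matter but only sketch. The sketch ``a second regular cyclic subgroup $C$ yields a blow-up structure, contradicting normality'' is false as stated: $\C_4\cong\K_2[\overline\K_2]$ is a normal circulant carrying exactly such a blow-up structure. The paper's Proposition~\ref{lem12} needs a careful prime-by-prime analysis (choosing the largest prime $p$ with $H_p\neq N_p$, showing the offending element acts trivially on $N_{p'}$, and for odd $p$ producing a coset structure on $S$ via $g\mapsto g^{p^{m-1}+1}$ that forces $\Aut(\Ga)\geqslant\Sy_p\wr\Aut(\Si)$), and the case $p=2$ requires separate treatment of the subgroups $\langle\sigma\rangle$ and $\langle\sigma\delta^{2^{m-3}}\rangle$ of $\Aut(G_2)$ precisely to accommodate the $\C_4$ exception. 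A lesser, fixable issue is step~(ii): to conclude that $T$ is also a union of $\bbZ_b$-cosets and that the isomorphism descends to the quotients, you need $\Si$ to be $R$-thin so that the $R$-equivalence classes of $\Cay(\bbZ_n,T)$ (which are cosets of the subgroup $\{u\mid T+u=T\}$) have size exactly $b$; this holds for the tensor product by the analysis in Section~\ref{sec3} (Lemmas~\ref{lem5} and~\ref{lem13}) but is nowhere justified in your proposal.
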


The structure of this paper is as follows. In Section~\ref{sec1} we prove the existence of tensor-lexicographic decompositions as described in Theorem~\ref{thm1}. This is then used, together with some result established in Section~\ref{sec4}, to give a proof of Theorem~\ref{thm3} in Section~\ref{sec2}. Finally, in Section~\ref{sec3}, we prove the uniqueness of tensor-lexicographic decompositions, thus completing the proof of Theorem~\ref{thm1}.

\section{Tensor-lexicographic decompositions}\label{sec1}

For digraphs $\Ga=(V_1,A_1)$ and $\Si=(V_2,A_2)$, the \emph{tensor product} (\emph{direct product}) $\Ga\times\Si$
is the digraph with vertex set $V_1\times V_2$ such that $(u_1,u_2)$ points to $(v_1,v_2)$ if and only if $(u_1,v_1)\in A_1$ and $(u_2,v_2)\in A_2$; the \emph{lexicographic product} $\Ga[\Si]$
is the digraph with vertex set $V_1\times V_2$ such that $(u_1,u_2)$ points to $(v_1,v_2)$ if and only if either $(u_1,v_1)\in A_1$, or $u_1=v_1$ and $(u_2,v_2)\in A_2$.
The following lemma follows immediately from the definition of tensor product of digraphs.

\begin{lemma}\label{lem4}
Let $\Ga$ and $\Si$ be digraphs.
Then $\Aut(\Ga\times\Si)\geqslant\Aut(\Ga)\times\Aut(\Si)$, and the following hold:
\begin{enumerate}[{\rm(a)}]
\item if $\Ga$ and $\Si$ are arc-transitive, then so is $\Ga\times\Si$;
\item if $\Ga$ and $\Si$ are circulants of coprime orders, then $\Ga\times\Si$ is a circulant.
\end{enumerate}
\end{lemma}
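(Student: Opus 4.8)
The plan is to verify all three assertions directly from the definition of the tensor product, since each reduces to an elementary check on the coordinatewise product action. First I would establish the subgroup containment. Given $\alpha\in\Aut(\Ga)$ and $\beta\in\Aut(\Si)$, define the permutation $(\alpha,\beta)$ of $V_1\times V_2$ by $(u_1,u_2)^{(\alpha,\beta)}=(u_1^\alpha,u_2^\beta)$. The key observation is that $((u_1,u_2),(v_1,v_2))$ is an arc of $\Ga\times\Si$ if and only if $(u_1,v_1)\in A_1$ and $(u_2,v_2)\in A_2$; since $\alpha$ and $\beta$ preserve $A_1$ and $A_2$ respectively (as well as their complements), this biconditional is preserved under $(\alpha,\beta)$. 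Hence $(\alpha,\beta)\in\Aut(\Ga\times\Si)$, and the assignment sending a pair $(\alpha,\beta)$ to the permutation it induces is an injective homomorphism from $\Aut(\Ga)\times\Aut(\Si)$ into $\Aut(\Ga\times\Si)$, giving the stated containment.

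For part~(a), I would use the containment just established. An arc of $\Ga\times\Si$ is a pair whose projection to the first coordinate is an arc of $\Ga$ and whose projection to the second coordinate is an arc of $\Si$. Given any two arcs of $\Ga\times\Si$, arc-transitivity of $\Ga$ supplies an $\alpha\in\Aut(\Ga)$ carrying the first-coordinate projection of the first arc to that of the second, and arc-transitivity of $\Si$ supplies a matching $\beta\in\Aut(\Si)$ for the second coordinate; then $(\alpha,\beta)$ carries the first arc to the second. Thus the subgroup $\Aut(\Ga)\times\Aut(\Si)$ already acts transitively on the arcs of $\Ga\times\Si$, so $\Ga\times\Si$ is arc-transitive.

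For part~(b), suppose $\Ga$ and $\Si$ are circulants whose orders $m=|V_1|$ and $n=|V_2|$ satisfy $\gcd(m,n)=1$. By definition $\Aut(\Ga)$ contains a cyclic subgroup $\langle g\rangle$ of order $m$ that is regular on $V_1$, and $\Aut(\Si)$ contains a cyclic subgroup $\langle h\rangle$ of order $n$ that is regular on $V_2$. The product $\langle g\rangle\times\langle h\rangle$ acts regularly on $V_1\times V_2$, and because $\gcd(m,n)=1$ it is cyclic of order $mn$. Via the containment from the first paragraph, this cyclic group embeds in $\Aut(\Ga\times\Si)$ as a regular subgroup on the vertex set of $\Ga\times\Si$, so $\Ga\times\Si$ is a circulant.

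Since every step merely unravels the definitions, there is no genuine obstacle here; the only points requiring care are that automorphisms preserve non-arcs as well as arcs (so the product action respects the defining biconditional), and that coprimality of $m$ and $n$ is precisely what upgrades the regular product group $\langle g\rangle\times\langle h\rangle$ from merely regular to cyclic, which is exactly what the definition of a circulant demands.
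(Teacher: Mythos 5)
Your proof is correct and complete; the paper states this lemma without proof, remarking only that it ``follows immediately from the definition of tensor product of digraphs,'' and your coordinatewise verification (the product action preserves the defining biconditional, transitivity on arcs transfers componentwise, and coprimality makes the product of the two regular cyclic groups a regular cyclic group) is precisely the routine argument the authors leave implicit. Nothing further is needed.
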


For a digraph $\Ga=(V,A)$ and a vertex $v\in V$, let $\Ga^-(v)=\{u\in V\mid (u,v)\in A\}$ and $\Ga^+(v)=\{w\in V\mid (v,w)\in A\}$, and call them the \emph{in-neighborhood} and the \emph{out-neighborhood} of $v$ in $\Ga$, respectively.

\begin{lemma}\label{lem10}
Let $\Ga$ be a digraph, and let $b$ be a positive integer.
Then $\Aut(\Ga[\overline\K_b])\geqslant\Sy_b\wr\Aut(\Ga)$, and the following hold:
\begin{enumerate}[{\rm(a)}]
\item if $\Ga$ is arc-transitive, then so is $\Ga[\overline\K_b]$;
\item if $\Ga$ is a circulant, then so is $\Ga[\overline\K_b]$.
\end{enumerate}
\end{lemma}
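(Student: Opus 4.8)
The plan is to first make the digraph $\Ga[\overline\K_b]$ completely explicit and then dispatch the three assertions in turn. Writing $V$ for the vertex set of $\Ga$ and identifying the vertex set of $\overline\K_b$ with $\{1,\dots,b\}$, the product $\Ga[\overline\K_b]$ has vertex set $V\times\{1,\dots,b\}$; since $\overline\K_b$ has no arcs, the second clause in the definition of the lexicographic product never applies, so $(u,i)$ points to $(v,j)$ precisely when $(u,v)$ is an arc of $\Ga$. Thus $\Ga[\overline\K_b]$ is the ``blow-up'' of $\Ga$ in which each vertex is replaced by $b$ copies and arcs are inherited from $\Ga$ with the second coordinate free. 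I shall view $\Sy_b\wr\Aut(\Ga)$ in its imprimitive action on $V\times\{1,\dots,b\}$, with base group $\Sy_b^{V}=\prod_{v\in V}\Sy_b$ acting within the fibres $\{v\}\times\{1,\dots,b\}$ and top group $\Aut(\Ga)$ permuting the fibres according to its action on $V$.

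For the containment $\Aut(\Ga[\overline\K_b])\geqslant\Sy_b\wr\Aut(\Ga)$, I would check that the two kinds of generators preserve the arc set. A base-group element $(h_v)_{v\in V}$ sends $(u,i)$ to $(u,i^{h_u})$ and hence fixes every first coordinate, so it preserves the condition $(u,v)\in A$; a top-group element $g\in\Aut(\Ga)$ sends $(u,i)$ to $(u^g,i)$, and $(u,v)\in A$ is equivalent to $(u^g,v^g)\in A$ because $g$ is an automorphism of $\Ga$. As both kinds of generators lie in $\Aut(\Ga[\overline\K_b])$, so does the whole wreath product.

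Part~(a) then follows by combining the arc-transitivity of $\Ga$ with the freedom in the base group. Given two arcs $((u,i),(v,j))$ and $((u',i'),(v',j'))$ of $\Ga[\overline\K_b]$ --- so that $(u,v)$ and $(u',v')$ are arcs of $\Ga$ --- I would first pick $g\in\Aut(\Ga)$ with $u^g=u'$ and $v^g=v'$, and apply it through the top group to reduce to the case $u=u'$, $v=v'$; since $\Ga$ is loopless in the situation of interest (a connected arc-transitive digraph of order at least two has no loop), we have $u'\neq v'$, so the fibres at $u'$ and $v'$ are distinct and I may independently choose $h_{u'},h_{v'}\in\Sy_b$ with $i^{h_{u'}}=i'$ and $j^{h_{v'}}=j'$; the resulting base-group element carries the first arc to the second (the case where $\Ga$ has no arc is vacuous). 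The point to watch here is exactly the loop issue: the two second coordinates can be adjusted independently only because $u'\neq v'$.

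Part~(b) is the substantive step. Identifying $V$ with $\ZZ_n$ so that a regular cyclic subgroup $C=\langle c\rangle\leqslant\Aut(\Ga)$ acts as $v\mapsto v+1$, writing $\Ga\cong\Cay(\ZZ_n,S)$, and identifying $\{1,\dots,b\}$ with $\ZZ_b$, I want a cyclic subgroup of order $nb$ acting regularly on $\ZZ_n\times\ZZ_b$ inside $\Sy_b\wr C\leqslant\Aut(\Ga[\overline\K_b])$. When $\gcd(n,b)=1$ one simply notes $\Ga[\overline\K_b]\cong\Cay(\ZZ_n\times\ZZ_b,\,S\times\ZZ_b)$ with $\ZZ_n\times\ZZ_b$ cyclic; to cover all $n,b$ I would instead exhibit the explicit ``odometer with carry'' permutation
\[
g\colon (v,i)\longmapsto\bigl(v+1,\ i+\delta_{v,n-1}\bigr),
\]
where the second coordinate is incremented by $1$ (mod $b$) exactly when $v=n-1$. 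This $g$ equals a base-group element (the $b$-cycle $i\mapsto i+1$ on the single fibre indexed by $0$, the identity elsewhere) composed with the top generator $c$, so $g\in\Sy_b\wr C$; tracing the orbit of $(0,0)$ shows that $g$ is a single $nb$-cycle, whence $\langle g\rangle\cong\ZZ_{nb}$ is regular and $\Ga[\overline\K_b]$ is a circulant. The only real obstacle is this construction when $\gcd(n,b)>1$, for then $\ZZ_n\times\ZZ_b$ is not cyclic and one genuinely needs the carry to realise $\ZZ_{nb}$ as a regular subgroup of $\Sy_b\wr\ZZ_n$.
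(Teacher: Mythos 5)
Your proof is correct and follows essentially the same route as the paper's: the same embedding of $\Sy_b\wr\Aut(\Ga)$ via fibre-wise permutations together with the induced action on fibres, a transitivity argument for (a) that the paper packages as the stabilizer $(\Sym(B_2)\times\dots\times\Sym(B_m))\rtimes\Aut(\Ga)_{v_1}$ acting transitively on the out-neighbourhood $\Ga^+(v_1)\times W$, and for (b) exactly the paper's element $(\alpha,\beta)$ with $\beta=(g,1,\dots,1)$ --- a regular cyclic element of order $nb$ obtained by concentrating the carry in a single fibre, valid irrespective of $\gcd(n,b)$. The loop/vertex-transitivity caveat you flag in (a) is tacitly assumed by the paper as well (its transitivity claim likewise requires $v_1\notin\Ga^+(v_1)$, and a single loop with $b\geqslant2$ indeed yields a non-arc-transitive blow-up), so making it explicit is, if anything, more careful than the original.
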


\begin{proof}
Let $V$ be the vertex set of $\Ga$, and let $W$ be the vertex set of $\overline \K_b$.
Then the vertex set of $\Ga[\overline\K_b]$ is $V\times W$, and for $v_1,v_2\in V$ and $w_1,w_2\in W$, $(v_1,w_1)$ points to $(v_2,w_2)$ in $\Ga[\overline\K_b]$ if and only if $v_1$ points to $v_2$ in $\Ga$.
Write $V=\{v_1,v_2,\dots,v_m\}$, where $m=|V|$, and let $B_i=\{v_i\}\times W$ for $i=1,2,\dots,m$.
For each $\alpha\in\Aut(\Ga)$ and $\beta=(\beta_1,\dots,\beta_m)\in\Sym(B_1)\times\dots\times\Sym(B_m)$, let $(\alpha,\beta)$ act on $V\times W$ by
\[
(v_i,w)^{(\alpha,\beta)}=(v_i^\alpha,w^{\beta_i})\ \text{ for $i\in\{1,\dots,m\}$ and $w\in W$}.
\]
Then $(\alpha,\beta)$ is an automorphism of $\Ga[\overline\K_b]$, and so
\[
\Aut(\Ga[\overline\K_b])\geqslant G:=(\Sym(B_1)\times\dots\times\Sym(B_m))\rtimes\Aut(\Ga)=\Sy_b\wr\Aut(\Ga).
\]

Assume that $\Ga$ is arc-transitive.
Then $\Ga$ is vertex-transitive, and the stabilizer $\Aut(\Ga)_{v_1}$ of $v_1$ in $\Aut(\Ga)$ is transitive on $\Ga^+(v_1)$.
Note that the vertex-transitivity of $\Ga$ implies the vertex-transitivity of $\Ga[\overline\K_b]$.
Moreover, for an arbitrary $w\in W$, the out-neighborhood of $(v_1,w)$ in $\Ga[\overline\K_b]$ is $\Ga^+(v_1)\times W$,
and $(\Sym(B_2)\times\dots\times\Sym(B_m))\rtimes\Aut(\Ga)_{v_1}$ is a subgroup of $\Aut(\Ga[\overline\K_b])$ fixing $(v_1,w)$ and transitive on $\Ga^+(v_1)\times W$.
We conclude that $\Ga[\overline\K_b]$ is arc-transitive.

Now assume that $\Ga$ is a (not necessarily arc-transitive) circulant.
Then there exists $\alpha\in\Aut(\Ga)$ such that $\langle\alpha\rangle$ is regular on $V$.
Take $g\in\Sym(B_1)$ such that $\langle g\rangle$ is regular on $B_1$, and let $\beta=(g,1,\dots,1)\in\Sym(B_1)\times\dots\times\Sym(B_m)$.
Then $(\alpha,\beta)$ is an element of order $mb$ in $\Aut(\Ga[\overline\K_b])$ such that $\langle(\alpha,\beta)\rangle$ is regular on $V\times W$. This shows that $\Ga[\overline\K_b]$ is a circulant, completing the proof.
\end{proof}

In this section we shall prove the existence of tensor-lexicographic decompositions as in Theorem~\ref{thm1}, namely, Proposition~\ref{Structural-thm}.


%

\begin{lemma}\label{lem3}
Let $\Ga$ and $\Si$ be digraphs, and let $m$ be a positive integer. Then $\Ga[\overline\K_m]\times\Si\cong(\Ga\times\Si)[\overline\K_m]$.
\end{lemma}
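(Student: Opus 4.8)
The plan is to exhibit an explicit isomorphism between the two digraphs by identifying the natural bijection on their vertex sets and checking that it preserves arcs in both directions. Write $\Ga=(V_1,A_1)$ and $\Si=(V_2,A_2)$, and let $W$ be the vertex set of $\overline\K_m$. Then both $\Ga[\overline\K_m]\times\Si$ and $(\Ga\times\Si)[\overline\K_m]$ have vertex sets that, up to a reordering of coordinates, are equal to $V_1\times W\times V_2$, so the candidate isomorphism is the coordinate-swap $\phi\colon((u_1,w),u_2)\mapsto((u_1,u_2),w)$, which is visibly a bijection. It then remains only to verify that $\phi$ and $\phi^{-1}$ send arcs to arcs.

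The key observation driving the verification is that $\overline\K_m$ has no arcs, so the second clause in the definition of the lexicographic product never applies. First I would unwind the left-hand side: since $\overline\K_m$ is edgeless, $(u_1,w_1)$ points to $(v_1,w_2)$ in $\Ga[\overline\K_m]$ precisely when $(u_1,v_1)\in A_1$, and hence $((u_1,w_1),u_2)$ points to $((v_1,w_2),v_2)$ in $\Ga[\overline\K_m]\times\Si$ precisely when $(u_1,v_1)\in A_1$ and $(u_2,v_2)\in A_2$. Next I would unwind the right-hand side: $(u_1,u_2)$ points to $(v_1,v_2)$ in $\Ga\times\Si$ exactly when $(u_1,v_1)\in A_1$ and $(u_2,v_2)\in A_2$, and again because $\overline\K_m$ is edgeless the outer lexicographic product adds no further arcs, so $((u_1,u_2),w_1)$ points to $((v_1,v_2),w_2)$ in $(\Ga\times\Si)[\overline\K_m]$ under exactly the same condition $(u_1,v_1)\in A_1$ and $(u_2,v_2)\in A_2$.

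Comparing the two arc criteria, they coincide and are both independent of the $W$-coordinates $w_1,w_2$; therefore $\phi$ maps an arc to an arc, and by the symmetry of the criterion so does $\phi^{-1}$, which completes the proof. There is no genuine obstacle here: the content is entirely the bookkeeping above, and the only point that requires any care is remembering that the edgelessness of $\overline\K_m$ collapses the lexicographic product to the operation of replacing each vertex by an independent set of size $m$, after which both sides are seen to describe the same blow-up of $\Ga\times\Si$.
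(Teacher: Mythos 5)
Your proposal is correct and matches the paper's proof essentially verbatim: the paper also uses the coordinate-swap bijection $((u,v),w)\mapsto((u,w),v)$ and the same chain of arc-condition equivalences, with the edgelessness of $\overline\K_m$ collapsing the lexicographic clause. No gaps; nothing to add.
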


\begin{proof}
Let $U$ and $W$ be the vertex sets of $\Ga$ and $\Si$, respectively, and let $\De=(V,\emptyset)$ be a digraph such that $\De\cong\overline\K_m$. Consider the bijection
\[
\psi\colon(U\times V)\times W\rightarrow(U\times W)\times V,\quad((u,v),w)\mapsto((u,w),v).
\]
Then $\psi$ is a bijection from the vertex set of $\Ga[\De]\times\Si$ to that of $(\Ga\times\Si)[\De]$.
Moreover, as $\De$ has no arc, for $(u_1,v_1,w_1)$ and $(u_2,v_2,w_2)$ in $U\times V\times W$,
\begin{align*}
&\mbox{$((u_1,v_1),w_1)$ points to $((u_2,v_2),w_2)$ in $\Ga[\De]\times\Si$}\\
\Longleftrightarrow&\mbox{$(u_1,v_1)$ points to $(u_2,v_2)$ in $\Ga[\De]$ and $w_1$ points to $w_2$ in $\Si$}\\
\Longleftrightarrow&\mbox{$u_1$ points to $u_2$ in $\Ga$ and $w_1$ points to $w_2$ in $\Si$}\\
\Longleftrightarrow&\mbox{$(u_1,w_1)$ points to $(u_2,w_2)$ in $\Ga\times\Si$}\\
\Longleftrightarrow&\mbox{$((u_1,w_1),v_1)$ points to $((u_2,w_2),v_2)$ in $(\Ga\times\Si)[\De]$.}
\end{align*}
This shows that $\psi$ is a digraph isomorphism from $\Ga[\De]\times\Si$ to $(\Ga\times\Si)[\De]$, proving the lemma.
\end{proof}

One can easily verify that the lexicographic product of digraphs is associative (see, for example,~\cite[Proposition~5.11]{HIK2011}). Since $\overline\K_m[\overline\K_\ell]\cong\overline\K_{m\ell}$ for all positive integers $m$ and $\ell$, we then have the following lemma.

\begin{lemma}\label{lem2}
Let $\Ga$ be a digraph, and let $m$ and $\ell$ be positive integers.
Then $\Ga[\overline\K_m][\overline\K_\ell]\cong\Ga[\overline\K_{m\ell}]$.
\end{lemma}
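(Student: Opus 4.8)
The plan is to reduce everything to the two ingredients just recorded before the statement: the associativity of the lexicographic product and the identity $\overline\K_m[\overline\K_\ell]\cong\overline\K_{m\ell}$. First I would write $\Ga[\overline\K_m][\overline\K_\ell]=(\Ga[\overline\K_m])[\overline\K_\ell]$ and apply associativity to regroup the factors as $\Ga[\,\overline\K_m[\overline\K_\ell]\,]$. Then, substituting the isomorphism $\overline\K_m[\overline\K_\ell]\cong\overline\K_{m\ell}$ into the inner factor, and using that replacing a factor of a lexicographic product by an isomorphic digraph yields an isomorphic product, I obtain $\Ga[\,\overline\K_m[\overline\K_\ell]\,]\cong\Ga[\overline\K_{m\ell}]$, which is exactly the desired conclusion.

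There is essentially no obstacle in this route, since both ingredients are elementary: associativity of the lexicographic product is a standard fact (available from~\cite[Proposition~5.11]{HIK2011}), and $\overline\K_m[\overline\K_\ell]$ is plainly an edgeless digraph of order $m\ell$, hence isomorphic to $\overline\K_{m\ell}$. If one prefers a self-contained argument that avoids invoking the cited associativity result, I would instead exhibit a direct isomorphism in the style of the proof of Lemma~\ref{lem3}: fix a bijection $\{1,\dots,m\}\times\{1,\dots,\ell\}\to\{1,\dots,m\ell\}$, use it together with the identity on the vertex set of $\Ga$ to build a bijection between the vertex sets of the two sides, and then verify the arc condition. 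The key simplification making this verification trivial is that $\overline\K_m$ and $\overline\K_\ell$ carry no arcs, so in either triple product a vertex points to another precisely when their $\Ga$-coordinates form an arc of $\Ga$, independent of the other coordinates; the only point requiring care is the bookkeeping of the three coordinate factors, which is routine.
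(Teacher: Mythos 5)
Your proposal is correct and matches the paper's own argument, which likewise derives the lemma from associativity of the lexicographic product (citing~\cite[Proposition~5.11]{HIK2011}) together with the identity $\overline\K_m[\overline\K_\ell]\cong\overline\K_{m\ell}$. Your extra remark that isomorphic inner factors yield isomorphic lexicographic products is a point the paper leaves implicit, and your optional direct verification is a fine but unnecessary alternative.
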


For a digraph $\Ga=(V,A)$ and a partition $\calP=\{B_1,\dots,B_m\}$ of $V$, denote by $\Ga/\calP$ the digraph with vertex set $\calP$ such that $(B_i,B_j)$ is an arc if and only if there exist $u\in B_i$ and $v\in B_j$ with $(u,v)\in A$.
If $\calP$ is a partition whose parts are the orbits of some permutation group $N$ on $V$, then $\Ga/\calP$ is also written as $\Ga_N$.
The following result follows from~\cite{EP2002,LM1996,LM1998} (see~\cite[Theorem~2.3]{Li2005}).

\begin{lemma}\label{prop1}
Let $\Ga$ be a connected arc-transitive circulant of order $n$. Then one of the following holds:
\begin{enumerate}[{\rm(a)}]
\item $\Aut(\Ga)=X_1\times\dots\times X_t$, where $t\geqslant1$ and for each $i\in\{1,\dots,t\}$ either $X_i=\Sy_{n_i}$ or $X_i$ has a normal regular cyclic subgroup of order $n_i$, such that $n_1,\dots,n_t$ are pairwise coprime and $n=n_1\cdots n_t$;
\item $\Aut(\Ga)$ has a normal subgroup $N$ such that $\Ga\cong\Ga_N[\overline\K_d]$ for some $d>1$.
\end{enumerate}
\end{lemma}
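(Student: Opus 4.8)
The plan is to recast the problem in terms of the Schur ring (S-ring) of $G=\Aut(\Ga)$ and then to read off the structure of $G$ from the classification of S-rings over cyclic groups, which is the content of the cited works~\cite{EP2002,LM1996,LM1998}. Fix a regular cyclic subgroup $R=\langle z\rangle\leqslant G$ of order $n$ and identify the vertex set with $\ZZ_n$ so that $R$ acts by translation. The orbits of the point stabiliser $G_0$ span an S-ring $\mathcal{A}$ over $\ZZ_n$; arc-transitivity says exactly that the out-neighbourhood $S=\Ga^+(0)$ is a single basic set of $\mathcal{A}$, while connectedness says $\langle S\rangle=\ZZ_n$. The $G$-invariant partitions correspond to subgroups of the cyclic group $R$, i.e.\ to divisors of $n$; it is this cyclic structure that makes the decomposition theory below applicable.

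First I would invoke the Leung--Man decomposition: $\mathcal{A}$ is built from S-rings over cyclic groups of prime-power order by iterated tensor products over coprime moduli and generalized wreath products. If $\mathcal{A}$ is a tensor product $\mathcal{A}_1\otimes\dots\otimes\mathcal{A}_t$ across a coprime factorisation $n=n_1\cdots n_t$ carrying no further wreath layer, then by coprimality and the Chinese Remainder Theorem $G$ splits as a direct product $X_1\times\dots\times X_t$, where $X_i$ acts on $\ZZ_{n_i}$ with regular cyclic subgroup $\langle z^{n/n_i}\rangle$. For each factor the restricted basic set is either the whole of $\ZZ_{n_i}\setminus\{0\}$ or a proper orbit. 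In the first case the $i$th component is the complete graph $\K_{n_i}$, whose full automorphism group is $\Sy_{n_i}$, so $X_i=\Sy_{n_i}$; in the second case the prime-power analysis of~\cite{EP2002} forces $\langle z^{n/n_i}\rangle$ to be normal in $X_i$ (for a single prime this is Burnside's dichotomy for transitive groups of prime degree). This is conclusion~(a).

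If instead $\mathcal{A}$ carries a nontrivial generalized wreath structure, there is a subgroup $U=\langle z^{n/d}\rangle$ of order $d>1$ such that $S$ is a union of cosets of $U$. Since $\Ga$ has no loop (arc-transitivity together with $n\geqslant2$), the coset $U$ itself meets $S$ in nothing, so no two vertices of a single $U$-coset are adjacent, while the relation $x\to y$ between distinct cosets depends only on the cosets of $x$ and $y$. Thus, taking $N$ to be the kernel of the $G$-action on the $G$-invariant partition $\calP$ into $U$-cosets --- a normal subgroup whose orbits are exactly the blocks of $\calP$ --- the blocks are edgeless and the adjacency is lexicographic, giving $\Ga\cong\Ga_N[\overline\K_d]$, which is conclusion~(b).

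The main obstacle is the dichotomy itself: showing that the Leung--Man operations leave no intermediate possibility, so that once $G$ admits no coprime tensor splitting into the trivial or normal-cyclic factors of~(a), a genuine wreath layer of the type yielding~(b) must occur. Controlling the interaction between tensor and wreath decompositions across the divisor lattice of $n$ --- in particular at the prime-power sections, where the delicate input of~\cite{EP2002} is required --- is the technical heart of the argument; by comparison, the identification of the symmetric-group factors and of the edgeless bottom layer sketched above is routine once arc-transitivity and connectedness are invoked.
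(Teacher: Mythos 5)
The paper does not actually prove this lemma at all: it is quoted from the literature, with the text stating that it ``follows from~\cite{EP2002,LM1996,LM1998} (see~\cite[Theorem~2.3]{Li2005})''. Your sketch follows exactly that Schur-ring route (essentially the derivation in~\cite{Kovacs2004}): pass to the transitivity module $\mathcal{A}$ of $G=\Aut(\Ga)$ over $\ZZ_n$, observe that arc-transitivity makes $S=\Ga^+(0)$ a single basic set and connectedness gives $\langle S\rangle=\ZZ_n$, then read conclusion~(a) off a coprime tensor decomposition and conclusion~(b) off a proper generalized wreath (wedge) decomposition. So in approach you match the sources the paper itself defers to, and, like the paper, you leave the genuinely hard content --- the exhaustive alternative that every S-ring over a cyclic group is trivial, normal, a nontrivial tensor product over coprime subgroups, or a proper S-wedge product --- to~\cite{LM1996,LM1998,EP2002}. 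You flag this honestly as ``the technical heart''; that is precisely the black box the paper also uses, so relative to the paper this deferral is not a defect.

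Two local steps in your sketch are misattributed, though both are easily repaired. First, the splitting of $G$ across a coprime tensor decomposition is not ``the Chinese Remainder Theorem'': you need the $2$-closure identity $G=\Aut(\mathcal{A})$ (immediate, since $S\in\mathcal{A}$ gives $\Aut(\mathcal{A})\leqslant\Aut(\Ga)=G\leqslant\Aut(\mathcal{A})$) together with the nontrivial known theorem that $\Aut(\mathcal{A}_1\otimes\mathcal{A}_2)=\Aut(\mathcal{A}_1)\times\Aut(\mathcal{A}_2)$ for circulant S-rings of coprime orders. Second, in the wedge case your appeal to loop-freeness only yields $0\notin S$; the disjointness you actually need comes from connectedness: a basic set is either contained in or disjoint from any $\mathcal{A}$-subgroup, and $S\subseteq U$ would force $\langle S\rangle\leqslant U<\ZZ_n$. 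Moreover, in a proper S-wedge $L\leqslant U$ it is the \emph{lower} subgroup $L$, not $U$, whose cosets tile every basic set outside $U$; taking $d=|L|$, the disjointness $S\cap L=\emptyset$ gives edgeless blocks, the coset-union property gives the lexicographic adjacency, and then $\Sy_d^{n/d}\leqslant\Aut(\Ga)$ ensures that the kernel $N$ of the action on the block system has the $L$-cosets as its orbits, so $\Ga\cong\Ga_N[\overline\K_d]$ as required in~(b). With these corrections your outline is a faithful reconstruction of the cited proof.
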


We also need the following lemma.

\begin{lemma}\label{tensor}
Let $\Ga$ be a $G$-arc-transitive digraph, where $G=X\times Y$ is in product action with permutation groups $X$ and $Y$.
Then $\Ga\cong\Ga_1\times\Ga_2$ for some $X$-arc-transitive digraph $\Ga_1$ and $Y$-arc-transitive digraph $\Ga_2$.
\end{lemma}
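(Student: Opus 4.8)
The plan is to produce $\Ga_1$ and $\Ga_2$ as the two coordinate projections of $\Ga$. Writing the product action explicitly, I take $V=V_1\times V_2$ to be the vertex set of $\Ga$, with $X$ acting on $V_1$, with $Y$ acting on $V_2$, and with $(v_1,v_2)^{(x,y)}=(v_1^x,v_2^y)$ for all $(x,y)\in G$. Denoting the arc set of $\Ga$ by $A$, I define $\Ga_1=(V_1,A_1)$ and $\Ga_2=(V_2,A_2)$, where $A_1$ consists of those pairs $(u_1,v_1)$ such that $((u_1,u_2),(v_1,v_2))\in A$ for some $u_2,v_2\in V_2$, and $A_2$ is defined symmetrically. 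These are the first- and second-coordinate projections of $A$.

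The key input is that $G$-arc-transitivity makes $A$ a single $G$-orbit. (The case $A=\emptyset$ is trivial, so assume $A\neq\emptyset$.) Fixing a base arc $a_0=((p_1,p_2),(q_1,q_2))\in A$, every arc of $\Ga$ equals $a_0^{(x,y)}=((p_1^x,p_2^y),(q_1^x,q_2^y))$ for some $(x,y)\in G$. Projecting onto the first coordinates gives $A_1=\{(p_1^x,q_1^x):x\in X\}$, which is a single orbit of $X$ acting diagonally on $V_1\times V_1$; in particular $X\leqslant\Aut(\Ga_1)$ and $\Ga_1$ is $X$-arc-transitive. The symmetric computation shows that $\Ga_2$ is $Y$-arc-transitive.

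It then remains to check that, under the identification $V=V_1\times V_2$, the arc set $A$ coincides with the arc set of $\Ga_1\times\Ga_2$, i.e.\ that $A=A_1\times A_2$. The inclusion $A\subseteq A_1\times A_2$ is immediate from the definition of the projections. The reverse inclusion is the crux of the argument and is exactly where the direct product structure of $G$ is used: given $(u_1,v_1)\in A_1$ and $(u_2,v_2)\in A_2$, I may write $(u_1,v_1)=(p_1^x,q_1^x)$ and $(u_2,v_2)=(p_2^y,q_2^y)$ with $x\in X$ and $y\in Y$ chosen \emph{independently}, whence $((u_1,u_2),(v_1,v_2))=a_0^{(x,y)}\in A$ because $(x,y)\in X\times Y=G$. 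This establishes $A=A_1\times A_2$, so $\Ga=\Ga_1\times\Ga_2$ and the proof is complete.

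I expect this final reconstruction step to be the only genuine obstacle: if $G$ were a proper subdirect product of $X\times Y$, the elements $x$ and $y$ could not be selected independently and the containment $A_1\times A_2\subseteq A$ would break down. The remaining steps are routine manipulations with the product action.
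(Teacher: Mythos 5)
Your proof is correct, and it takes a genuinely different, more elementary route than the paper's. The paper invokes the coset digraph formalism of \cite[Section~2]{GLX2017}: since $\Ga$ is $G$-arc-transitive, $\Ga\cong\Cos(G,G_v,g)$ with $G_v=X_u\times Y_w$; writing $g=(x,y)$, it sets $\Ga_1=\Cos(X,X_u,x)$ and $\Ga_2=\Cos(Y,Y_w,y)$ and verifies that the defining double coset factorizes, $G_vgG_v=(X_uxX_u)\times(Y_wyY_w)$, which yields the isomorphism. You dispense with coset digraphs entirely: you define $\Ga_1$ and $\Ga_2$ as the coordinate projections of the arc set on the original vertex sets, use that $A$ is a single $G$-orbit on a base arc $a_0$, and observe that since $G$ is the \emph{full} direct product, the parameters $x\in X$ and $y\in Y$ in $a_0^{(x,y)}$ range independently, giving $A=A_1\times A_2$ exactly — so under the identification $V=V_1\times V_2$ you obtain equality $\Ga=\Ga_1\times\Ga_2$ rather than mere isomorphism. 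The two arguments rest on the same mechanism (a $G$-orbit on ordered pairs factorizes as the product of an $X$-orbit and a $Y$-orbit when $G=X\times Y$): your reconstruction step $A_1\times A_2\subseteq A$ is precisely the combinatorial shadow of the paper's double-coset identity, and your remark that this is where a proper subdirect product would break the argument correctly identifies the crux. What the paper's version buys is a uniform group-theoretic template; what yours buys is self-containedness, a concrete description of the factors on the given vertex sets, sound handling of the factors' arc-transitivity ($A_1$ is a single $X$-orbit, hence $X\leqslant\Aut(\Ga_1)$ acts transitively on $A_1$), and explicit treatment of the degenerate case $A=\emptyset$, which the coset-digraph formulation tacitly excludes since it presupposes an arc.
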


\begin{proof}
Let $X$ act on $U$ and $Y$ act on $W$ so that $\Ga$ has vertex set $V:=U\times W$.
For a vertex $v=(u, w)$ of $\Ga$ with $u\in U$ and $w\in W$, the stabilizer of $v$ in $G$ is given by $G_v=X_u\times Y_w$.
Since $\Ga$ is $G$-arc-transitive, there exists $g\in G$ such that $\Ga$ is isomorphic to the coset digraph $\Cos(G,G_v,g)$~\cite[Section~2]{GLX2017}.
Write $g=(x,y)$ with $x\in X$ and $y\in Y$, and let $\Ga_1=\Cos(X,X_u,x)$ and $\Ga_2=\Cos(Y,Y_w,y)$.
Then for each $a=(a_1,a_2)$ and $b=(b_1,b_2)$ in $G=X\times Y$ we have
\begin{align*}
&\mbox{$G_va$ points to $G_vb$ in $\Cos(G,G_v,g)$}\\
\Longleftrightarrow\,&ba^{-1}\in G_vgG_v\\
\Longleftrightarrow\,&\mbox{$(b_1a_1^{-1},b_2a_2^{-1})\in(X_uxX_u)\times(Y_wyY_w)$}\\
\Longleftrightarrow\,&\mbox{$b_1a_1^{-1}\in X_uxX_u$ and $b_2a_2^{-1}\in Y_wyY_w$}\\
\Longleftrightarrow\,&\mbox{$X_ua_1$ points to $X_ub_1$ in $\Ga_1$ and $Y_wa_2$ points to $Y_wb_2$ in $\Ga_2$}\\
\Longleftrightarrow\,&\mbox{$(X_ua_1,Y_wa_2)$ points to $(X_ub_1,Y_wb_2)$ in $\Ga_1\times\Ga_2$.}
\end{align*}
This shows that the (well-defined) map $G_v(a_1,a_2)\mapsto(X_ua_1,H_wa_2)$ is a digraph isomorphism from $\Cos(G,G_v,g)$ to $\Ga_1\times\Ga_2$.
Hence $\Ga\cong\Cos(G,G_v,g)\cong\Ga_1\times\Ga_2$, which completes the proof.
\end{proof}

We are now ready to prove the main result of this section.

\begin{proposition}\label{Structural-thm}
Let $\Ga$ be a connected arc-transitive circulant of order $n$.
Then there exist a connected arc-transitive normal circulant $\Ga_0$ of order $n_0$
and positive integers $n_1,\dots,n_r,b$, where $r\geqslant0$, such that the following hold:
\begin{enumerate}[{\rm(1)}]
\item $\Ga_0\ncong\C_4$;
\item $n_i\geqslant4$ for $i=1,\dots,r$;
\item $n=n_0n_1\cdots n_rb$, and $n_0,n_1,\dots,n_r$ are pairwise coprime;
\item $\Ga\cong(\Ga_0\times\K_{n_1}\times\dots\times\K_{n_r})[\overline\K_b]$;
\item $\Aut(\Ga)\cong\Sy_b\wr(\Aut(\Ga_0)\times\Sy_{n_1}\times\dots\times\Sy_{n_r})$.
\end{enumerate}
\end{proposition}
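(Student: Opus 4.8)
The plan is to induct on the order $n$, using the reductive dichotomy of Lemma~\ref{prop1} at each step. The base case $n=1$ is the single loop, for which I take $\Ga_0$ to be the loop, $r=0$ and $b=1$. For $n>1$ I apply Lemma~\ref{prop1} to $\Ga$ and treat its two alternatives separately: alternative~(b) is the recursive case, while alternative~(a) is the terminal case that actually produces the normal core $\Ga_0$ and the complete factors.

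In the blow-up case~(b) there is a normal subgroup $N\trianglelefteq\Aut(\Ga)$ with $\Ga\cong\Ga_N[\overline\K_d]$ and $d>1$. First I would check that $\Ga_N$ is again a connected arc-transitive circulant, now of order $n/d$: connectivity and arc-transitivity descend to the quotient because $N$ is normal, and the regular cyclic subgroup of $\Aut(\Ga)$ induces on the $N$-orbits (which form the block system corresponding to its unique subgroup of order $d$) a regular cyclic group of order $n/d$. By the inductive hypothesis $\Ga_N\cong(\Ga_0\times\K_{n_1}\times\dots\times\K_{n_r})[\overline\K_{b'}]$ with (1)--(5) holding. Composing with the outer $\overline\K_d$ and collapsing the two lexicographic layers by Lemma~\ref{lem2} gives $\Ga\cong(\Ga_0\times\K_{n_1}\times\dots\times\K_{n_r})[\overline\K_{b'd}]$, so I set $b=b'd$. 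Conditions (1)--(3) are inherited verbatim (the triple $(\Ga_0,\{n_1,\dots,n_r\})$ is unchanged and only $b$ grows, and (3) does not constrain $b$), and (4) holds by construction; the verification of (5) is deferred to the last paragraph.

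In the product case~(a) we have $\Aut(\Ga)=X_1\times\dots\times X_t$ in product action. I would split the indices into $I=\{i:X_i=\Sy_{n_i}\text{ with }n_i\geq4\}$ and its complement $J$, noting that $\Sy_2$ and $\Sy_3$ possess normal regular cyclic subgroups and so are placed in $J$. Writing $\Aut(\Ga)=(\prod_{i\in I}X_i)\times(\prod_{j\in J}X_j)$ and applying Lemma~\ref{tensor} repeatedly yields $\Ga\cong(\prod_{i\in I}\Ga_i)\times\Ga_0$, where each $\Ga_i$ is $\Sy_{n_i}$-arc-transitive and $\Ga_0$ is $(\prod_{j\in J}X_j)$-arc-transitive. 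Since $\Sy_{n_i}$ is $2$-transitive, its only loopless arc-transitive digraph carrying an arc is $\K_{n_i}$, and $\Ga_i$ has arcs because $\Ga$ does, so $\Ga_i\cong\K_{n_i}$. To see that $\Ga_0$ is a \emph{normal} circulant I would run an order count: by Lemma~\ref{lem4} one has $\Aut(\Ga)\geq\Aut(\prod_{i\in I}\K_{n_i})\times\Aut(\Ga_0)\geq(\prod_{i\in I}\Sy_{n_i})\times\Aut(\Ga_0)$, while $\prod_{j\in J}X_j\leq\Aut(\Ga_0)$ and $|\Aut(\Ga)|=\prod_{i\in I}n_i!\cdot\prod_{j\in J}|X_j|$; hence every inequality is an equality, so in particular $\Aut(\Ga_0)=\prod_{j\in J}X_j$, which contains a normal regular cyclic subgroup of order $n_0=\prod_{j\in J}n_j$ (cyclic by coprimality). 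Thus $\Ga_0$ is a connected arc-transitive normal circulant, the $\K_{n_i}$ with $i\in I$ are the complete factors, pairwise coprimality and $n_i\geq4$ hold by construction, and $b=1$. The only way $\Ga_0\cong\C_4$ can occur here is $I=\varnothing$ and $\Ga=\C_4$, since any genuine complete factor would turn $\C_4\times\prod_{i\in I}\K_{n_i}\cong(\K_2\times\prod_{i\in I}\K_{n_i})[\overline\K_2]$ into a blow-up and place $\Ga$ in case~(b) instead; in that single situation I would rewrite $\C_4\cong\K_2[\overline\K_2]$ via Lemmas~\ref{lem3} and~\ref{lem2}, replacing $\Ga_0$ by $\K_2$ and taking $b=2$.

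The main obstacle is condition~(5), the exact determination of $\Aut(\Ga)$. Lemmas~\ref{lem4} and~\ref{lem10} only give the containment $\Aut(\Ga)\geq\Sy_b\wr(\Aut(\Ga_0)\times\Sy_{n_1}\times\dots\times\Sy_{n_r})$; the reverse inclusion is the heart of the matter. In case~(a) equality is forced by the order count above (there $b=1$, and the wreath product degenerates to $\prod_{i}X_i=\Aut(\Ga)$). The difficulty is propagating this through the recursion of case~(b), where I would argue that the tensor core $\Delta:=\Ga_0\times\K_{n_1}\times\dots\times\K_{n_r}$ is \emph{reduced}, i.e.\ admits no nontrivial lexicographic factor $\Delta'[\overline\K_e]$ with $e>1$. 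This is exactly what the order count in case~(a) certifies and what the exclusion $\Ga_0\ncong\C_4$ protects, since a $\C_4$ factor would present $\Delta$ as a blow-up. For a reduced $\Delta$ the fibers of $\Delta[\overline\K_b]$ are precisely its maximal classes of non-adjacent twins, hence are permuted as blocks by every automorphism, giving $\Aut(\Delta[\overline\K_b])=\Sy_b\wr\Aut(\Delta)$ and closing the induction. Establishing the reducedness of $\Delta$ from coprimality of the orders and $\Ga_0\ncong\C_4$, together with this twin-class argument, is the step I expect to require the most care.
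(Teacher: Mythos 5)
Your treatment of parts (1)--(4) is essentially the paper's own argument in inductive clothing: where you strip one lexicographic layer at a time via alternative~(b) of Lemma~\ref{prop1}, the paper makes the extremal choice of $b$ (maximal with $\Ga\cong\Si[\overline\K_b]$) and lands in alternative~(a) at once for a quotient $\Ga_M$ with $M$ normal in $\Aut(\Ga)$; both proofs then split the tensor core by Lemma~\ref{tensor}, pin down $\Aut(\Ga_M)=\Aut(\Ga_0)\times\Sy_{n_1}\times\dots\times\Sy_{n_r}$ by the same sandwich between the product group of Lemma~\ref{prop1}(a) and the lower bound of Lemma~\ref{lem4}, and exclude $\Ga_0\cong\C_4$ by exhibiting a larger blow-up factor. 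One caution here: your claim that a $\C_4$ core with $I\neq\varnothing$ ``would place $\Ga$ in case~(b) instead'' treats the alternatives of Lemma~\ref{prop1} as mutually exclusive, which they are not; the correct repair, available from your own order count, is that $\C_4\times\prod_{i\in I}\K_{n_i}\cong(\K_2\times\prod_{i\in I}\K_{n_i})[\overline\K_2]$ forces $\Aut(\Ga)\geqslant\Sy_2\wr(\Sy_2\times\prod_{i\in I}\Sy_{n_i})$, contradicting the equality $\Aut(\Ga)=\Aut(\C_4)\times\prod_{i\in I}\Sy_{n_i}$ you have already established.

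The genuine gap is in part~(5), and it is the step you yourself flag. Your route needs the tensor core $\De=\Ga_0\times\K_{n_1}\times\dots\times\K_{n_r}$ to be $R$-thin, and you assert this ``is exactly what the order count in case~(a) certifies''; it is not. The order count determines $\Aut(\De)$, but extracting $R$-thinness from that is precisely the unproved content. What the claim actually requires is: (i)~twins in a tensor product factorize (true here, since out- and in-neighborhoods in $\De$ are products of nonempty neighborhoods of the factors), so $R$-thickness of $\De$ would force $R$-thickness of $\Ga_0$; and (ii)~an $R$-thick connected normal circulant with nonempty arc set is isomorphic to $\C_4$ --- a substantive lemma (the paper's Lemma~\ref{lem5}(c), proved via Lemma~\ref{prop3} by a transposition argument) that your proposal neither proves nor reduces to anything cited, and whose borderline case $\C_4$ shows it is delicate rather than formal. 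The paper's actual proof of~(5) bypasses this apparatus entirely: the fibers of the blow-up are the orbits of the normal subgroup $M\trianglelefteq\Aut(\Ga)$, hence form an $\Aut(\Ga)$-invariant partition for free; the kernel $K$ of $\Aut(\Ga)$ on this partition equals $\Sy_b^{n/b}$, the induced group $\Aut(\Ga)/K$ embeds in $\Aut(\Ga_M)$, and comparing orders with the lower bound $\Sy_b\wr\Aut(\Ga_M)\lesssim\Aut(\Ga)$ from Lemma~\ref{lem10} forces $\Aut(\Ga)\cong\Sy_b\wr\Aut(\Ga_M)$. (The twin-class machinery you propose is developed in the paper only afterwards, in Lemmas~\ref{lem5} and~\ref{lem13}, where it is needed for the uniqueness statement, not for existence.) A secondary bookkeeping issue: your induction hands you $\Aut(\Ga_N)$, not $\Aut(\De)$, so even granting $R$-thinness you must strengthen the inductive hypothesis, or observe that $\Aut(\De)=\Aut(\Ga_0)\times\Sy_{n_1}\times\dots\times\Sy_{n_r}$ follows from~(5) for $\Ga_N$ by comparing $|\Aut(\De)|$ inside the two nested wreath bounds; this is fixable, but as written the recursion does not close.
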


\begin{proof}
For each normal subgroup $N$ of $\Aut(\Ga)$, the graph $\Ga_N$ is a connected arc-transitive circulant, and so $\Ga_N$ is also described in Lemma~\ref{prop1}.
Let $b$ be the largest integer $d$ such that $\Ga\cong\Si[\overline\K_d]$ for some connected arc-transitive circulant $\Si$.
Then by Lemmas~\ref{lem2} and~\ref{prop1} we have $\Ga\cong\Ga_M[\overline\K_b]$ for some normal subgroup $M$ of $\Aut(\Ga)$ with
\[
\Aut(\Ga_M)=X_1\times\dots\times X_t,
\]
where for each $i\in\{1,\dots,t\}$ either $X_i=\Sy_{n_i}$ or $X_i$ has a normal regular cyclic subgroup of order $n_i$, such that $n_1,\dots,n_t$ are pairwise coprime and $n/b=n_1\cdots n_t$.
Without loss of generality, assume that $\{1,\dots,r\}$ is the subset of $\{1,\dots,t\}$ consisting of elements $i$ such that $X_i$ has no normal regular cyclic subgroup of order $n_i$.
Then $X_i=\Sy_{n_i}$ with $n_i\geqslant4$ for $i\in\{1,\dots,r\}$, and $X_i$ has a normal regular cyclic subgroup of order $n_i$ for $i\in\{r+1,\dots,t\}$.
Let $n_0=n_{r+1}\cdots n_t$ and $X_0=X_{r+1}\times\dots\times X_t$.
It follows that $n/b=n_0n_1\cdots n_r$ with $n_0,n_1,\dots,n_r$ pairwise coprime, $X_0$ has a normal regular cyclic subgroup of order $n_0$, and
\begin{equation}\label{eq1}
\Aut(\Ga_M)=X_0\times X_1\times\dots\times X_r=X_0\times\Sy_{n_1}\times\dots\times\Sy_{n_r}.
\end{equation}
We then derive from Lemma~\ref{tensor} that
\[
\Ga_M\cong\Ga_0\times\K_{n_1}\times\dots\times\K_{n_r}
\]
for some $X_0$-arc-transitive digraph $\Ga_0$.
As a consequence,
\[
\Ga\cong\Ga_M[\overline\K_b]\cong(\Ga_0\times\K_{n_1}\times\dots\times\K_{n_r})[\overline\K_b].
\]
Moreover, $\Ga_0$ is connected as $\Ga_0\times\K_{n_1}\times\dots\times\K_{n_r}\cong\Ga_M$ is connected.
Note that
\[
\Aut(\Ga_M)\cong\Aut(\Ga_0\times\K_{n_1}\times\dots\times\K_{n_r})\geqslant\Aut(\Ga_0)\times\Sy_{n_1}\times\dots\times\Sy_{n_r}.
\]
This together with~\eqref{eq1} and $X_0\leqslant\Aut(\Ga_0)$ implies that
\begin{equation}\label{eq2}
\Aut(\Ga_M)=\Aut(\Ga_0)\times\Sy_{n_1}\times\dots\times\Sy_{n_r}
\end{equation}
and $\Aut(\Ga_0)=X_0$.
Hence $\Aut(\Ga_0)$ has a normal regular cyclic subgroup of order $n_0$, which means that $\Ga_0$ is a normal circulant.
If $\Ga_0\cong\C_4$, then as $\C_4\cong\K_2[\overline\K_2]$, we derive from Lemmas~\ref{lem3} and~\ref{lem2} that
\begin{align*}
\Ga&\cong(\Ga_0\times\K_{n_1}\times\dots\times\K_{n_r})[\overline\K_b]\\
&\cong(\K_2[\overline\K_2]\times\K_{n_1}\times\dots\times\K_{n_r})[\overline\K_b]\\
&\cong(\K_2\times\K_{n_1}\times\dots\times\K_{n_r})[\overline\K_2][\overline\K_b]\\
&\cong(\K_2\times\K_{n_1}\times\dots\times\K_{n_r})[\overline\K_{2b}],
\end{align*}
contradicting our choice of $b$.
Thus $\Ga_0\ncong\C_4$.

Now that we have obtained a connected arc-transitive normal circulant $\Ga_0$ and positive integers $n_1,\dots,n_r,b$ satisfying~(1)--(4), it remains to show that~(5) holds.
Let $\calP$ be the partition of the vertex set of $\Ga$ consisting of orbits of $M$, and let $K$ be the kernel of $\Aut(\Ga)$ acting on $\calP$.
Since $\Ga\cong\Ga_M[\overline\K_b]$, we have $K=\Sy_b^{n/b}$ and $\Aut(\Ga)/K\lesssim\Aut(\Ga_M)$.
Moreover, by Lemma~\ref{lem10} we have
\[
\Aut(\Ga)\gtrsim\Sy_b\wr\Aut(\Ga_M)=\Sy_b^{n/b}\rtimes\Aut(\Ga_M).
\]
Hence $\Aut(\Ga)\cong\Sy_b\wr\Aut(\Ga_M)$, which then together with~\eqref{eq2} leads to~(5), as required.
\end{proof}

\section{Normal circulants}\label{sec4}

Let $G$ be a group. For $g\in G$ denote by $\widehat{g}$ the permutation of $G$ such that $x^{\widehat{g}}=xg$ for each $x\in G$.
Then $\widehat{G}:=\{\widehat{g}\mid g\in G\}$ is a regular permutation group on $G$.
For a subset $S$ of $G$, let
\[
\Aut(G,S)=\{\alpha\in\Aut(G)\mid S^\alpha=S\}.
\]
Then $\widehat{G}\rtimes\Aut(G,S)$ is a subgroup of $\Nor_{\Aut(\Ga)}(\widehat{G})$, the normalizer of $\widehat{G}$ in $\Aut(\Ga)$, where $\Ga=\Cay(G,S)$.
In fact, we have the following:

\begin{lemma}\label{prop3}
\emph{(\cite{Godsil1981,Xu1998})}
Let $\Ga=\Cay(G,S)$ be a Cayley digraph of a finite group $G$. Then $\Nor_{\Aut(\Ga)}(\widehat{G})=\widehat{G}\rtimes\Aut(G,S)$.
\end{lemma}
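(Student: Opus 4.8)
The plan is to prove the two containments separately, the nontrivial one being that $\Nor_{\Aut(\Ga)}(\widehat{G})$ is contained in $\widehat{G}\rtimes\Aut(G,S)$. The reverse containment $\widehat{G}\rtimes\Aut(G,S)\leqslant\Nor_{\Aut(\Ga)}(\widehat{G})$ is already recorded before the statement, and rests on two routine verifications: that each $\alpha\in\Aut(G,S)$ is a digraph automorphism of $\Ga$ (because $yx^{-1}\in S$ if and only if $y^\alpha(x^\alpha)^{-1}=(yx^{-1})^\alpha\in S^\alpha=S$), and that conjugation satisfies $\alpha^{-1}\widehat{g}\alpha=\widehat{g^\alpha}$, so that $\Aut(G)$ normalizes $\widehat{G}$.

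For the forward containment I would first reduce to a statement about point stabilizers. Write $A=\Aut(\Ga)$ and $N=\Nor_A(\widehat{G})$, and let $N_1$ be the stabilizer in $N$ of the identity vertex $1\in G$. Since $\widehat{G}$ is regular on $G$, it is transitive and meets $N_1$ trivially (the only element of $\widehat{G}$ fixing $1$ is $\widehat{1}$), whence $N=\widehat{G}\rtimes N_1$. As group automorphisms of $G$ fix $1$ and normalize $\widehat{G}$, we have $\Aut(G,S)\leqslant N_1$; so it suffices to prove $N_1\leqslant\Aut(G,S)$, which then yields $N=\widehat{G}\rtimes N_1=\widehat{G}\rtimes\Aut(G,S)$.

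The heart of the argument is to show that every $\sigma\in N_1$ coincides, as a permutation of $G$, with a group automorphism preserving $S$. Since $\sigma$ normalizes $\widehat{G}$, conjugation by $\sigma$ induces an automorphism of $\widehat{G}\cong G$, so one may define $\alpha\in\Aut(G)$ by $\sigma^{-1}\widehat{g}\sigma=\widehat{g^\alpha}$ for all $g\in G$. The key computation is that $\sigma$ and $\alpha$ agree on every vertex: rewriting the defining relation as $\widehat{g}\sigma=\sigma\widehat{g^\alpha}$ and using $1^\sigma=1$, one gets $g^\sigma=1^{\widehat{g}\sigma}=1^{\sigma\widehat{g^\alpha}}=(1^\sigma)^{\widehat{g^\alpha}}=1^{\widehat{g^\alpha}}=g^\alpha$. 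Thus $\sigma=\alpha$ as a permutation of the vertex set. It then remains to check $S^\alpha=S$: the out-neighborhood of $1$ in $\Ga$ is exactly $\Ga^+(1)=S$, and since $\sigma=\alpha$ fixes $1$ and is a digraph automorphism, it permutes $\Ga^+(1)$, giving $S^\alpha=S^\sigma=S$. Hence $\sigma=\alpha\in\Aut(G,S)$, as required.

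The main obstacle is the identification $\sigma=\alpha$: one must recognize that normalizing $\widehat{G}$ forces $\sigma$ to act by the induced group automorphism rather than by some unrelated permutation, and this is precisely where fixing the base vertex $1$ is used. Once this identity is established, the preservation of $S$ is immediate from the interpretation of $S$ as the out-neighborhood of $1$, and the decomposition $N=\widehat{G}\rtimes N_1$ completes the proof.
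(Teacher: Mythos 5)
Your proof is correct and complete: the decomposition $N=\widehat{G}\rtimes N_1$ via regularity of $\widehat{G}$, the identification $g^\sigma=1^{\widehat{g}\sigma}=1^{\sigma\widehat{g^\alpha}}=g^\alpha$ forcing any $\sigma\in N_1$ to coincide with the group automorphism $\alpha$ it induces by conjugation on $\widehat{G}$, and the final step $S^\alpha=\Ga^+(1)^\sigma=\Ga^+(1)=S$ are all sound. The paper itself offers no proof of this lemma, citing it from \cite{Godsil1981,Xu1998}; your argument is precisely the standard one found in those references, so there is nothing to flag.
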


By Lemma~\ref{prop3} we see that if $\Ga=\Cay(G,S)$ is a connected arc-transitive normal Cayley digraph, then $\Aut(\Ga)=\widehat{G}\rtimes\Aut(G,S)$, whose stabilizer of the vertex $1$ is $\Aut(G,S)$.
This implies the next lemma.

\begin{lemma}\label{lem9}
Let $\Ga=\Cay(G,S)$ be a connected arc-transitive normal Cayley digraph of a finite cyclic group $G$. Then $S$ consists of generators of $G$, and $\Aut(G,S)$ acts regularly on $S$.
\end{lemma}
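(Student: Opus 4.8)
The plan is to leverage the identification of $\Aut(\Ga)$ furnished by Lemma~\ref{prop3} together with the defining property of arc-transitivity. Since $\Ga$ is a \emph{normal} Cayley digraph, $\widehat{G}$ is normal in $\Aut(\Ga)$, so Lemma~\ref{prop3} gives $\Aut(\Ga)=\Nor_{\Aut(\Ga)}(\widehat{G})=\widehat{G}\rtimes\Aut(G,S)$, and the stabilizer of the identity vertex $1$ is exactly $\Aut(G,S)$ (as remarked before the lemma). I would first observe that the out-neighborhood of $1$ in $\Cay(G,S)$ is precisely $S$, since $1$ points to $y$ if and only if $y\in S$. Because $\Ga$ is arc-transitive, hence vertex-transitive, arc-transitivity is equivalent to the vertex stabilizer $\Aut(\Ga)_1=\Aut(G,S)$ acting transitively on the out-neighborhood $S$ of $1$. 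This yields the transitivity of $\Aut(G,S)$ on $S$, which is the engine driving both assertions.

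Next I would show that $S$ consists of generators of $G$. Every element of $\Aut(G,S)\leqslant\Aut(G)$ preserves the orders of elements, and since $\Aut(G,S)$ is transitive on $S$, all elements of $S$ share a common order, say $m$. Here I use the structure of cyclic groups: in $G$ the elements of order $m$ all lie in the unique subgroup $H$ of order $m$. Thus $S\subseteq H$, and as $H$ is a subgroup, connectedness of $\Ga$ (which forces $\langle S\rangle=G$) gives $H=G$, so $m=|G|$ and every $s\in S$ generates $G$.

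Finally, to upgrade transitivity to regularity it remains to check that point stabilizers are trivial. Suppose $\alpha\in\Aut(G,S)$ fixes some $s\in S$. As $s$ is a generator of $G$ by the previous step and $\alpha$ is a group automorphism, $\alpha$ fixes $\langle s\rangle=G$ pointwise, whence $\alpha=1$. Therefore the transitive action of $\Aut(G,S)$ on $S$ is regular, completing the proof.

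The lemma is not deep once the machinery of Lemma~\ref{prop3} is in hand; the step requiring the most care is the deduction that a common element-order together with connectedness forces each $s$ to be a generator, where the uniqueness of subgroups of each order in a cyclic group is essential (in a non-cyclic group the same argument would fail). The identification of the vertex stabilizer with $\Aut(G,S)$ via normality, and the standard equivalence between arc-transitivity and transitivity of the stabilizer on the out-neighborhood, are the other two ingredients I would flag explicitly.
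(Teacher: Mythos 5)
Your proof is correct and follows essentially the same route as the paper, which states the lemma as an immediate consequence of Lemma~\ref{prop3} and the identification of the vertex stabilizer with $\Aut(G,S)$: you have simply made explicit the implicit steps (stabilizer transitive on the out-neighborhood $S$, the common-order argument via the unique subgroup of each order in a cyclic group together with $\langle S\rangle=G$, and triviality of the stabilizer of a generator). All details check out, including the careful flagging that the generator deduction relies on $G$ being cyclic.
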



The ``only if'' part of the next result is in fact a consequence of Proposition~\ref{prop4}, Lemma~\ref{lem9} and the assertion of Toida's conjecture~\cite{Toida1977} that every circulant with connection set consisting of generators is CI, which was proved independently in~\cite{DM2002} and~\cite{KMP2001}. Here we give a direct and self-contained proof of this lemma. For a prime number $r$ and a finite cyclic group $X$, denote the unique Sylow $r$-subgroup and the unique Hall $r'$-subgroup of $X$ by $X_r$ and $X_{r'}$, respectively.

\begin{proposition}\label{lem12}
A finite connected arc-transitive circulant is normal if and only if its automorphism group contains a unique regular cyclic subgroup.
\end{proposition}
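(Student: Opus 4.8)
The plan is to treat the two implications separately; the reverse implication is immediate, while the forward implication (normality $\Rightarrow$ uniqueness) carries all the work. For the easy direction, suppose $\Aut(\Ga)$ has a unique regular cyclic subgroup, and write $\Ga=\Cay(G,S)$ with $\widehat{G}$ this subgroup. For any $\alpha\in\Aut(\Ga)$ the conjugate $\widehat{G}^{\,\alpha}$ is again regular and cyclic, so $\widehat{G}^{\,\alpha}=\widehat{G}$ by uniqueness; thus $\widehat{G}\trianglelefteq\Aut(\Ga)$ and $\Ga$ is a normal circulant.

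For the converse, assume $\Ga=\Cay(G,S)$ is normal, so $\widehat{G}\trianglelefteq A:=\Aut(\Ga)$ with $G\cong\bbZ_n$. By Lemma~\ref{prop3} we have $A=\widehat{G}\rtimes\Aut(G,S)$, and by Lemma~\ref{lem9} the set $S$ consists of generators of $G$ while $\Aut(G,S)$ is regular on $S$. Since $\Aut(G,S)\leqslant\Aut(G)\cong\bbZ_n^{\times}$, the whole group $A$ lies in $\Hol(G)$: every element of $A$ is an affine map $x\mapsto ux+c$ with multiplier $u$ in the subgroup $U\leqslant\bbZ_n^{\times}$ corresponding to $\Aut(G,S)$, and $S=s_0U$ for some generator $s_0$. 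Let $R=\langle\rho\rangle$ be an arbitrary regular cyclic subgroup and write $\rho\colon x\mapsto ux+c$. It suffices to prove $u=1$, for then $\rho$ is a translation and, being transitive of order $n$, forces $R=\widehat{G}$.

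The heart of the argument is to show that a nontrivial multiplier would exhibit $\Ga$ as a nontrivial lexicographic product, contradicting normality. First I would pass to the Sylow decomposition $\bbZ_n\cong\prod_r\bbZ_{r^{a_r}}$ over the primes $r\di n$, under which $\Hol(G)$ factors and $\rho=(\rho_r)_r$ with $\rho_r\colon y\mapsto u_ry+c_r$; regularity of $R$ forces each $\langle\rho_r\rangle$ to be transitive, hence (being cyclic, so regular) of order exactly $r^{a_r}$, so each local multiplier $u_r$ has $r$-power order. Now suppose $u\neq1$ and choose a prime $p\di d:=\mathrm{ord}(u)$. Then $w:=u^{d/p}$ is the multiplier of $\rho^{d/p}\in A$, hence $w\in U$, it has order $p$, and from the previous sentence one checks that $w\equiv1\pmod{n/p}$ (the computation in $\bbZ_{2^{a}}^{\times}$ needs a short separate check for $p=2$, where regularity excludes the involutions $-1$ and $2^{a-1}-1$); in particular $p^2\di n$. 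Consequently $\langle w\rangle$ is the order-$p$ kernel of the reduction $\bbZ_n^{\times}\to\bbZ_{n/p}^{\times}$ and $\langle w\rangle\leqslant U$, so $U$, and hence $S=s_0U$, is a union of cosets of the order-$p$ subgroup $K:=(n/p)\bbZ_n$; that is, $S=S+K$. Since $S$ consists of generators we have $S\cap K=\varnothing$, so $\Ga\cong\Si[\overline{\K}_p]$ with $\Si=\Ga_{\widehat{K}}$ a circulant of order $n/p$. By Lemma~\ref{lem10}, $\Aut(\Ga)\geqslant\Sy_p\wr\Aut(\Si)$ contains a nonidentity permutation fixing at least $n-p$ vertices pointwise; but a nonidentity affine map $x\mapsto ux+c$ fixes at most $n/2$ points, while $n-p>n/2$ because $n\geqslant p^2$ (the sole exception $(p,n)=(2,4)$ being checked directly). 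This contradicts $A\leqslant\Hol(G)$, so $u=1$ and $R=\widehat{G}$.

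The main obstacle is exactly this core step: converting the mere existence of a regular cyclic subgroup with nontrivial multiplier into the rigid blow-up structure $S=S+K$. The delicate inputs are that regularity pins each local multiplier $u_r$ to have $r$-power order (so the extracted $w$ is congruent to $1$ modulo $n/p$), that $S$ consisting of generators makes the blocks independent sets, and that normality is precisely what confines $A$ to $\Hol(G)$ and thereby rules out the wreath automorphisms of the blow-up. I expect the prime $p=2$ to demand the only genuinely separate computation, since $\bbZ_{2^{a}}^{\times}$ is noncyclic and its involutions must be sorted according to whether regularity permits them.
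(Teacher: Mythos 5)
Your proof is correct, and it shares the paper's overall skeleton---both arguments use normality to confine $\Aut(\Ga)$ to $\widehat{G}\rtimes\Aut(G,S)\leqslant\Hol(G)$, extract from a putative second regular cyclic subgroup an automorphism of order $p$ acting trivially modulo $n/p$, conclude via Lemma~\ref{lem9} that $S$ is a union of cosets of the order-$p$ subgroup so that $\Ga\cong\Si[\overline\K_p]$, and then contradict normality using the wreath automorphisms---but your execution differs genuinely at three junctures, each time in a more elementary direction. First, the paper produces its order-$p$ automorphism by choosing the largest prime $p$ with $H_p\neq N_p$ and running a centralizer argument ($\Cen_{\Aut(\Ga)}(N)=N$, comparison of Sylow subgroups of $H$ and $N$) before landing in $\Aut(G_p)$; you instead take the multiplier $u$ of a generator $\rho$ of $R$, split by CRT into local multipliers $u_r$ of $r$-power order (regularity of each projection $\langle\rho_r\rangle$ doing the work), and raise $u$ to the power $d/p$---a cleaner extraction that never leaves the holomorph. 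Second, at $p=2$ the paper handles the residual involutions via a subgroup analysis, placing $H_2$ inside $N_2\rtimes\langle\sigma\rangle$ (dihedral, forcing $H_2=N_2$) or $N_2\rtimes\langle\sigma\delta^{2^{m-3}}\rangle$ (a commutator computation with $5^{2^{m-3}}+1\equiv2\pmod 4$), together with a detour through $\Si\cong\K_2$ and $\Ga\cong\C_4$ using Lemma~\ref{lem5}; your exclusion of $-1$ and $2^{a-1}-1$ is the one-line observation that with such a multiplier $\rho_2^2$ is a translation of order at most $2$, so $\rho_2$ has order at most $4<2^a$, incompatible with regularity (and when $\mathrm{ord}(u_2)\geqslant4$ the involution of $\langle u_2\rangle$ in $\bbZ_{2^a}^\times=\langle-1\rangle\times\langle5\rangle$ is automatically $2^{a-1}+1$, as your ``short separate check'' indicates). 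Third, your closing contradiction---a nonidentity affine map $x\mapsto ux+c$ fixes at most $\gcd(u-1,n)\leqslant n/2$ points, while $\Sy_p\wr\Aut(\Si)$ contains a transposition fixing $n-2>n/2$ points when $n>4$---replaces the paper's appeal to Lemma~\ref{lem5} and its $\C_4$ case split; your sole exception $(p,n)=(2,4)$ is indeed vacuous, since $\Hol(\bbZ_4)\cong\D_8$ has a unique regular cyclic subgroup (and your $a\geqslant3$ refinement already forces $8\di n$ when $p=2$). What the paper's route buys is the structural identification of the exceptional graph as $\C_4$, which it reuses elsewhere; what yours buys is self-containedness: it needs only Lemmas~\ref{prop3} and~\ref{lem9} plus the fixed-point count, avoiding Lemma~\ref{lem5} entirely.
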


\begin{proof}
Let $\Ga=\Cay(G,S)$ be a connected arc-transitive Cayley digraph of a finite cyclic group $G$. If $\Aut(\Ga)$ contains a unique regular cyclic group, then every conjugate of $\widehat{G}$ in $\Aut(\Ga)$ is equal to $\widehat{G}$, and so $\widehat{G}$ is normal in $\Aut(\Ga)$.
Conversely, suppose that $\widehat{G}$ is normal in $\Aut(\Ga)$. Write $N=\widehat{G}$. Then since $N$ is an abelian regular subgroup of $\Aut(\Ga)$, we obtain $\Cen_{\Aut(\Ga)}(N)=N$. By Lemma~\ref{prop3} we have $\Aut(\Ga)=N\rtimes\Aut(G,S)$.

Let $H$ be a regular cyclic subgroup of $\Aut(\Ga)$. Suppose towards a contradiction that $H\neq N$. Take $p$ to be the largest prime number such that $H_p\neq N_p$ and take $h$ to be an arbitrary element of $H_p\setminus N_p$. Then $h\notin N=\Cen_{\Aut(\Ga)}(N)$. Consider an arbitrary prime divisor $q$ of $|N|$ such that $h\notin\Cen_{\Aut(\Ga)}(N_q)$. Since $h\in H_p$ and $h$ induces a nontrivial automorphism of $N_q\cong\ZZ_{|N_q|}$, we deduce that $p$ divides $|\Aut(\ZZ_{|N_q|})|$. Accordingly, $p\leqslant q$. If $p<q$, then $H_q=N_q$ and hence
\[
h\in H\leqslant\Cen_{\Aut(\Ga)}(H_q)=\Cen_{\Aut(\Ga)}(N_q),
\]
a contradiction. Thus $p=q$, which shows that $h\in\Cen_{\Aut(\Ga)}(N_{p'})$. Now $h$ is an element of $\Aut(\Ga)=N\rtimes\Aut(G,S)$ with order a $p$-power and conjugation action trivial on $N_{p'}$ but nontrivial on $N$. Write $h=k\alpha$ with $k\in N$ and $\alpha\in\Aut(G,S)$. Since for each $g\in G$ the conjugate of $\widehat{g}$ by $h=k\alpha$ is $\widehat{g^\alpha}$, it follows that $\alpha$ is a nonidentity element with order a $p$-power such that $g^\alpha=g$ for all $g\in G_{p'}$. Since $\Aut(G)=\Aut(G_p)\times\Aut(G_{p'})$, we may then write $\alpha=(\beta,1)$ with $\beta\in\Aut(G_p)$ such that $\beta$ has order $p^\ell$ for some positive integer $\ell$. Let $|G_p|=p^m$. Then we have $m>1$.

Suppose $p>2$. Let $\gamma$ be the automorphism of $G_p$ sending $g$ to $g^{p^{m-1}+1}$ for all $g\in G_p$. Then $\gamma$ has order $p$. Since $\Aut(G_p)$ is cyclic and $\beta$ is an element of $\Aut(G_p)$ of order $p^\ell$, it follows that $\gamma\in\langle\beta\rangle$ and so $(\gamma,1)\in\langle\alpha\rangle\leqslant\Aut(G,S)$. Take an arbitrary $s\in S$ and write $s=ab$ with $a\in G_p$ and $b\in G_{p'}$. By Lemma~\ref{lem9} we know that $s$ generates $G$, whence $a$ generates $G_p$. For $j\in\{0,1,\dots,p-1\}$, as $(p^{m-1}+1)^j\equiv jp^{m-1}+1\pmod{p^m}$, we derive that
\[
sa^{jp^{m-1}}=a^{jp^{m-1}+1}b=a^{(p^{m-1}+1)^j}b=a^{\gamma^j}b=s^{(\gamma,1)^j}\in s^{\langle\alpha\rangle}\subseteq S.
\]
Hence $sP\subseteq S$, where $P=\langle a^{p^{m-1}}\rangle$ is the unique subgroup of order $p$ in $G$. This shows that $S$ is a union of cosets of $P$ in $G$. Hence $\Ga=\Si[\overline\K_p]$ for some digraph $\Si$, and it follows that $\Aut(\Ga)\geqslant\Sy_p\wr\Aut(\Si)$.
This implies that $N=\widehat{G}$ is not normal in $\Aut(\Ga)$, a contradiction.

Now $p=2$. Let $\sigma$ and $\delta$ be the automorphisms of $G_2$ such that $g^\sigma=g^{-1}$ and $g^\delta=g^5$ for all $g\in G_2$. Note that $\Aut(G_2)=\langle\sigma\rangle\times\langle\delta\rangle\cong\ZZ_2\times\ZZ_{2^{m-2}}$ if $m\geqslant3$ and $\Aut(G_2)=\langle\sigma\rangle\cong\ZZ_2$ if $m=2$. Suppose that $m\geqslant3$ and $\delta^{2^{m-3}}\in\langle\beta\rangle$. Then $(\delta^{2^{m-3}},1)\in\langle\alpha\rangle\leqslant\Aut(G,S)$. Take an arbitrary $s\in S$ and write $s=ab$ with $a\in G_2$ and $b\in G_{2'}$. As Lemma~\ref{lem9} asserts, $s$ generates $G$, whence $a$ generates $G_2$. Note that $(5^{2^{m-3}}-1)_2=2^{m-1}$. Since
\[
sa^{5^{2^{m-3}}-1}=a^{5^{2^{m-3}}}b=a^{\delta^{2^{m-3}}}b=s^{(\delta^{2^{m-3}},1)}\in s^{\langle\alpha\rangle}\subseteq S,
\]
we then obtain $sP\subseteq S$, where $P=\langle a^{2^{m-1}}\rangle$ is the unique subgroup of order $2$ in $G$. This shows that $S$ is a union of cosets of $P$ in $G$. Hence $\Ga=\Si[\overline\K_2]$ for some digraph $\Si$, and it follows that $\Aut(\Ga)\geqslant\Sy_2\wr\Aut(\Si)$. This implies that $\Si\cong\K_2$ as $N=\widehat{G}$ is normal in $\Aut(\Ga)$ (see Lemma~\ref{lem5} below). However, it follows that $\Ga\cong\C_4$, in which case $\Aut(\Ga)$ has a unique regular cyclic subgroup, contradicting our assumption that $H\neq N$. Since the only nontrivial subgroups of $\Aut(G_2)$ for $m\geqslant3$ that do not contain $\delta^{2^{m-3}}$ are $\langle\sigma\rangle$ and $\langle\sigma\delta^{2^{m-3}}\rangle$, we then conclude that one of the following holds:
\begin{enumerate}[{\rm(i)}]
\item $m\geqslant2$ and $H_2\leqslant N_2\rtimes\langle\sigma\rangle$;
\item $m\geqslant3$ and $H_2\leqslant N_2\rtimes\langle\sigma\delta^{2^{m-3}}\rangle$.
\end{enumerate}
Let $\rho$ be a generator of $N_2$. Note that $N_2\cong G_2\cong\ZZ_{2^m}$.

First assume that~(i) occurs. Then since $H_2$ and $N_2$ are both cyclic subgroups of index $2$ in the dihedral group $N_2\rtimes\langle\sigma\rangle$, we have $H_2=N_2$, a contradiction.

Next assume that~(ii) occurs. Since $H_2\cong N_2$ and $H_2\neq N_2$, we see that $H_2\nleqslant N_2$. As $|N_2\rtimes\langle\sigma\delta^{2^{m-3}}\rangle|=2|N_2|$, this implies that $H_2N_2=N_2\rtimes\langle\sigma\delta^{2^{m-3}}\rangle$ and $H_2\cap N_2=\langle\rho^2\rangle$. Note that $H_2\cap N_2$ is centralized by both $H_2$ and $N_2$. It follows that $H_2\cap N_2$ is centralized by $H_2N_2=N_2\rtimes\langle\sigma\delta^{2^{m-3}}\rangle$. In particular,  $\rho^2$ commutes with $\sigma\delta^{2^{m-3}}$. This means that $\rho^{-2\cdot5^{2^{m-3}}}=\rho^2$, which is equivalent to $2(5^{2^{m-3}}+1)\equiv0\pmod{2^m}$. However, since $5^{2^{m-3}}+1\equiv2\pmod{4}$, we derive that $2(5^{2^{m-3}}+1)$ is not divisible by $8$ and hence not divisible by $2^m$ as $m\geqslant3$, a contradiction. The proof is thus completed.
\end{proof}

\section{CI-property of arc-transitive circulants}\label{sec2}

In this section we prove that every finite connected arc-transitive circulant is a CI-digraph, thus verifying Theorem~\ref{thm3}.

\begin{lemma}\label{prop4}
\emph{(\cite{Babai1977})} Let $\Ga$ be a Cayley digraph of a group $G$. Then $\Cay(G,S)$ is a CI-digraph if and only if each subgroup of $\Aut(\Ga)$ conjugate to $\widehat{G}$ in $\Sym(G)$ is conjugate to $\widehat{G}$ in $\Aut(\Ga)$.
\end{lemma}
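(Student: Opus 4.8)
The plan is to translate both the CI-property and the conjugacy condition into statements about isomorphisms of Cayley digraphs and then to bridge them via the normalizer of $\widehat{G}$ in $\Sym(G)$. I would first record two standard facts. \emph{Fact (i):} if $\phi\in\Sym(G)$ is a digraph isomorphism from $\Cay(G,T)$ onto $\Ga=\Cay(G,S)$, then $\phi^{-1}\Aut(\Cay(G,T))\phi=\Aut(\Ga)$; in particular $\phi^{-1}\widehat{G}\phi$ is a regular subgroup of $\Aut(\Ga)$ that is conjugate to $\widehat{G}$ in $\Sym(G)$, and conversely every regular subgroup of $\Aut(\Ga)$ isomorphic to $G$ realizes $\Ga$ as a Cayley digraph. \emph{Fact (ii):} the normalizer $\Nor_{\Sym(G)}(\widehat{G})$ is the holomorph $\widehat{G}\rtimes\Aut(G)$, with $\Aut(G)$ regarded as permutations of $G$ and $\alpha^{-1}\widehat{g}\alpha=\widehat{g^\alpha}$. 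The single computation driving everything is that each $\alpha\in\Aut(G)$ is a digraph isomorphism $\Cay(G,S)\to\Cay(G,S^\alpha)$, because $y^\alpha(x^\alpha)^{-1}=(yx^{-1})^\alpha$.

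For the ``if'' direction I would assume every subgroup of $\Aut(\Ga)$ conjugate to $\widehat{G}$ in $\Sym(G)$ is already conjugate to $\widehat{G}$ inside $\Aut(\Ga)$, take $T\subseteq G$ with $\Cay(G,T)\cong\Ga$, and fix an isomorphism $\phi\colon\Cay(G,T)\to\Ga$. Fact~(i) makes $H:=\phi^{-1}\widehat{G}\phi$ a subgroup of $\Aut(\Ga)$ conjugate to $\widehat{G}$ in $\Sym(G)$, so by hypothesis there is $\tau\in\Aut(\Ga)$ with $\tau^{-1}H\tau=\widehat{G}$. Then $\psi:=\phi\tau\colon\Cay(G,T)\to\Ga$ is an isomorphism normalizing $\widehat{G}$, hence $\psi\in\widehat{G}\rtimes\Aut(G)$ by Fact~(ii). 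Writing $\psi=\widehat{g}\alpha$ and using $y^\psi(x^\psi)^{-1}=(yx^{-1})^\alpha$, the arc condition for $\psi$ reduces to $yx^{-1}\in T\Leftrightarrow(yx^{-1})^\alpha\in S$, that is $T=S^{\alpha^{-1}}$ with $\alpha^{-1}\in\Aut(G)$; so $\Ga$ is a CI-digraph.

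For the ``only if'' direction I would assume $\Ga$ is a CI-digraph and take $H\leqslant\Aut(\Ga)$ with $H=\phi^{-1}\widehat{G}\phi$ for some $\phi\in\Sym(G)$. Then $\widehat{G}=\phi H\phi^{-1}\leqslant\phi\Aut(\Ga)\phi^{-1}=\Aut(\Ga^{\phi^{-1}})$, where $\Ga^{\phi^{-1}}$ is the image digraph; as $\widehat{G}$ is regular here, Fact~(i) gives $\Ga^{\phi^{-1}}=\Cay(G,T)$ for some $T$, with $\phi^{-1}\colon\Ga\to\Cay(G,T)$ an isomorphism. The CI-property yields $\alpha\in\Aut(G)$ with $T=S^\alpha$, and by the driving computation $\alpha\colon\Ga\to\Cay(G,T)$ is also an isomorphism. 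Hence $\tau:=\alpha\phi$ is an automorphism of $\Ga$ (it runs $\Ga\to\Cay(G,T)\to\Ga$), and since $\alpha$ normalizes $\widehat{G}$ we get $\tau^{-1}\widehat{G}\tau=\phi^{-1}\alpha^{-1}\widehat{G}\alpha\phi=\phi^{-1}\widehat{G}\phi=H$, so $H$ is conjugate to $\widehat{G}$ in $\Aut(\Ga)$.

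The main obstacle I anticipate is not conceptual but a matter of keeping conventions consistent: one must fix once and for all whether permutations act on the left or the right and which direction an isomorphism pushes arcs, so that the conjugation formulas $\Aut(\Ga^\sigma)=\sigma^{-1}\Aut(\Ga)\sigma$ and $\alpha^{-1}\widehat{g}\alpha=\widehat{g^\alpha}$ have the correct orientation. The two delicate points are getting $\Aut(\Ga^{\phi^{-1}})=\phi\Aut(\Ga)\phi^{-1}$ the right way round, and checking in each direction that the composite map really lands in $\Aut(\Ga)$ rather than merely in $\Sym(G)$; once the identity $y^\alpha(x^\alpha)^{-1}=(yx^{-1})^\alpha$ and Fact~(ii) are in place, the rest is bookkeeping.
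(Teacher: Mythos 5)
Your proof is correct, and both directions check out: the conjugation formulas $\Aut(\Ga^{\sigma})=\sigma^{-1}\Aut(\Ga)\sigma$ and $\alpha^{-1}\widehat{g}\alpha=\widehat{g^{\alpha}}$ are oriented consistently with the right-action convention, and the reduction of the arc condition for $\psi=\widehat{g}\alpha$ to $T=S^{\alpha^{-1}}$ via $y^{\psi}(x^{\psi})^{-1}=(yx^{-1})^{\alpha}$ is exactly right. Note that the paper itself gives no proof of this lemma, quoting it directly from \cite{Babai1977}; your argument, via the holomorph $\Nor_{\Sym(G)}(\widehat{G})=\widehat{G}\rtimes\Aut(G)$ and the observation that a $\widehat{G}$-invariant digraph on vertex set $G$ is $\Cay(G,T)$ with $T$ the out-neighborhood of the identity, is the standard (essentially Babai's original) proof, so there is no divergence to report.
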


%

The following lemma is also needed in this section.

\begin{lemma}\label{lem11}
Let $G$ be a cyclic transitive subgroup of $\Sym(\Omega)\times\Sym(\Delta)$ acting on $\Omega\times\Delta$ by product action. Then there exist regular cyclic subgroups $H$ and $K$ of $\Sym(\Omega)$ and $\Sym(\Delta)$, respectively, such that $G=H\times K$. In particular, $|\Omega|$ and $|\Delta|$ are coprime.
\end{lemma}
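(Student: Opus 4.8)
The plan is to exploit the two coordinate projections together with the elementary fact that a transitive abelian permutation group is regular. Write $\pi_1\colon G\to\Sym(\Omega)$ and $\pi_2\colon G\to\Sym(\Delta)$ for the restrictions to $G$ of the two projections of $\Sym(\Omega)\times\Sym(\Delta)$, and put $H=\pi_1(G)$ and $K=\pi_2(G)$. Since $G$ is cyclic, both $H$ and $K$, being homomorphic images of $G$, are cyclic. First I would check that transitivity of the product action forces each projection to be transitive: given $\omega,\omega'\in\Omega$, fix any $\delta\in\Delta$ and apply transitivity of $G$ to the pairs $(\omega,\delta)$ and $(\omega',\delta)$; this produces some $g\in G$ whose first coordinate sends $\omega$ to $\omega'$, so $H$ is transitive on $\Omega$, and symmetrically $K$ is transitive on $\Delta$.

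Next I would count orders. As $G$ is cyclic it is abelian, and an abelian transitive permutation group is regular; hence $G$ is regular on $\Omega\times\Delta$ and $|G|=|\Omega|\,|\Delta|$. The same fact applied to $H$ and $K$, which are abelian (indeed cyclic) and transitive by the previous step, shows that $H$ is regular on $\Omega$ and $K$ is regular on $\Delta$, so $|H|=|\Omega|$ and $|K|=|\Delta|$. By construction every element of $G$ is a pair whose first coordinate lies in $H$ and whose second coordinate lies in $K$, so $G\leqslant H\times K$; comparing orders gives $|H\times K|=|\Omega|\,|\Delta|=|G|$, and therefore $G=H\times K$ with $H$ and $K$ regular cyclic, as required.

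Finally, the coprimality of $|\Omega|$ and $|\Delta|$ falls out for free: we have $G\cong H\times K\cong\ZZ_{|\Omega|}\times\ZZ_{|\Delta|}$, and a direct product of two cyclic groups is cyclic precisely when their orders are coprime, so the cyclicity of $G$ forces $\gcd(|\Omega|,|\Delta|)=1$. I do not expect a genuine obstacle: the entire argument is a short order count resting on two standard facts (transitive abelian $\Rightarrow$ regular, and $\ZZ_a\times\ZZ_b$ cyclic $\iff\gcd(a,b)=1$). The only point needing a little care is the very first step---verifying that transitivity of the product action descends to each coordinate projection and that $G$ genuinely sits inside $H\times K$---but both follow immediately from unwinding the definition of the product action.
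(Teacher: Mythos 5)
Your proposal is correct and follows essentially the same route as the paper: project $G$ onto the two coordinates, observe that the projections are cyclic and transitive, invoke the fact that a transitive abelian permutation group is regular to get $|H|=|\Omega|$ and $|K|=|\Delta|$, and conclude $G=H\times K$ from $G\leqslant H\times K$ by an order/regularity comparison. Your explicit derivation of the coprimality via $\ZZ_{|\Omega|}\times\ZZ_{|\Delta|}$ being cyclic is the same argument the paper leaves implicit.
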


\begin{proof}
Let $H$ and $K$ be the projections of $G$ to $\Sym(\Omega)$ and $\Sym(\Delta)$, respectively. Then $G\leqslant H\times K$. Since $G$ is a cyclic subgroup transitive on $\Omega\times\Delta$ by product action, we see that $H$ is a cyclic transitive subgroup of $\Sym(\Omega)$ and $K$ is a cyclic transitive subgroup of $\Sym(\Delta)$. Note that every abelian transitive permutation group is regular. It follows that $|H|=|\Omega|$ and $|K|=|\Delta|$. Then since $G$ is transitive on $\Omega\times\Delta$, we derive from $G\leqslant H\times K$ that $G=H\times K$.
\end{proof}

For a digraph $\Ga$, denote by $V(\Ga)$ the vertex set of $\Ga$.

\begin{proof}[Proof of Theorem~$\ref{thm3}$] Let $\Ga=\Cay(G,S)$ be a connected arc-transitive digraph of a finite cyclic group $G$. By virtue of Lemma~\ref{prop4} it suffices to prove that each regular cyclic subgroup $H$ of $\Aut(\Ga)$ is conjugate to $\widehat{G}$.

By Proposition~\ref{Structural-thm}, $\Ga$ has a tensor-lexicographic decomposition $(\Ga_0,\{n_1,\dots,n_r\},b)$. Let $\Si=\Ga_0\times \K_{n_1}\times\dots\times \K_{n_r}$ and $m=|V(\Si)|$. Then we may regard $\Ga=\Si[\overline\K_b]$ so that $\Aut(\Ga)=\Sy_b\wr\Aut(\Si)$ with $\Aut(\Si)=\Aut(\Ga_0)\times\Sy_{n_1}\times\dots\times\Sy_{n_r}$. Let $\varphi$ be the projection of $\Aut(\Ga)=\Sy_b\wr\Aut(\Si)$ onto $\Aut(\Si)$ and let $N=\Sy_b^m$ be the kernel of $\varphi$. Since $\widehat{G}$ is a cyclic transitive permutation group on $V(\Ga)$, the image $\varphi(\widehat{G})$ of $\widehat{G}$ under $\varphi$ is a cyclic transitive permutation group on $V(\Si)$ and hence is regular. Similarly, $\varphi(H)$ is also a regular cyclic subgroup of $\Aut(\Si)$. Note that $\Aut(\Si)=\Aut(\Ga_0)\times\Sy_{n_1}\times\dots\times\Sy_{n_r}$ with $|V(\Ga_0)|,n_1,\dots,n_r$ pairwise coprime. Appealing to Lemma~\ref{lem11} we obtain
\[
\varphi(H)=A_0\times A_1\times\dots\times A_r\quad\text{and}\quad\varphi(\widehat{G})=B_0\times B_1\times\dots\times B_r
\]
such that $A_0$ and $B_0$ are regular cyclic subgroups of $\Aut(\Ga_0)$ and $A_i$ and $B_i$ are regular cyclic subgroups of $\Sy_{n_i}$ for $i=1,\dots,r$. Since $\Ga_0$ is a connected arc-transitive normal circulant, Proposition~\ref{lem12} implies that $A_0=B_0$. Moreover, $A_i$ and $B_i$ are conjugate in $\Sy_{n_i}$ for $i=1,\dots,r$. Thus $\varphi(H)$ and $\varphi(\widehat{G})$ are conjugate in $\Aut(\Ga_0)\times\Sy_{n_1}\times\dots\times\Sy_{n_r}=\Aut(\Si)$, and so there exist $h\in H$, $\alpha\in\widehat{G}$ and $\sigma\in\Aut(\Ga)$ such that $H=\langle h\rangle$, $\widehat{G}=\langle\alpha\rangle$ and $\varphi(\alpha)=\varphi(h^\sigma)$.

Since $h$ and $\alpha$ are $(mb)$-cycles, we may write $\alpha=(\alpha_1,\dots,\alpha_{mb})$ and $h^\sigma=(\beta_1,\dots,\beta_{mb})$ with $V(\Ga)=\{\alpha_1,\dots,\alpha_{mb}\}=\{\beta_1,\dots,\beta_{mb}\}$ and $\alpha_1=\beta_1$. For a vertex $w=(u,v)$ of $\Ga=\Si[\overline\K_b]$, where $u\in V(\Si)$ and $v\in V(\overline\K_b)$, denote $\overline{w}=u$. As $\langle\varphi(\alpha)\rangle=\varphi(\langle\alpha\rangle)=\varphi(H)$ is a regular cyclic subgroup of $\Aut(\Si)$, the permutation $\varphi(\alpha)$ of $V(\Si)$ is an $m$-cycle. Thereby we derive from the conditions $\alpha_1=\beta_1$ and $\varphi(\alpha)=\varphi(h^\sigma)$ that $\overline{\alpha_{i+mj}}=\overline{\beta_{i+mj}}$ for all $i\in\{1,\dots,m\}$ and $j\in\{0,1,\dots,b-1\}$. This shows that
\[
\{\beta_i,\beta_{i+m}\dots,\beta_{i+m(b-1)}\}=\{\alpha_i,\alpha_{i+m}\dots,\alpha_{i+m(b-1)}\}=\overline{\alpha_i}\times V(\overline\K_b)
\]
for $i=1,\dots,m$, and so these are the $m$ orbits of $N$ on $V(\Ga)$. Since $N=\Sy_b^m$, there exists $\delta\in N$ such that $\beta_{i+mj}^\delta=\alpha_{i+mj}$ for all $i\in\{1,\dots,m\}$ and $j\in\{0,1,\dots,b-1\}$. Hence
\[
h^{\sigma\delta}=(\beta_1,\dots,\beta_{mb})^\delta=(\beta_1^\delta,\dots,\beta_{mb}^\delta)=(\alpha_1,\dots,\alpha_{mb})=\alpha.
\]
It follows that $H^{\sigma\delta}=\langle h\rangle^{\sigma\delta}=\langle h^{\sigma\delta}\rangle=\langle\alpha\rangle=\widehat{G}$, which means that $H$ is conjugate to $\widehat{G}$ in $\Aut(\Ga)$, as desired.
\end{proof}

\section{Proof of Theorem~\ref{thm1}}\label{sec3}


\begin{definition}
A digraph $\Ga$ is said to be \emph{$R$-thick} if it has distinct vertices $u$ and $v$ such that $\Ga^+(u)=\Ga^+(v)$ and $\Ga^-(u)=\Ga^-(v)$.
A digraph that is not $R$-thick is said to be \emph{$R$-thin}.
\end{definition}

\begin{lemma}\label{lem5}
Let $\Ga$ be an $R$-thick vertex-transitive digraph with no loop.
Then the following hold:
\begin{enumerate}[{\rm(a)}]
\item $\Ga\cong\Si[\overline\K_b]$ for some digraph $\Si$ and $b\geqslant2$;
\item if $\Ga$ is a normal Cayley digraph, then $\Ga$ has order at most $4$;
\item if $\Ga$ is a normal circulant with nonempty arc set, then $\Ga\cong\C_4$.
\end{enumerate}
\end{lemma}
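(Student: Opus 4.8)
The plan is to handle the three parts in sequence, with part~(a) producing a block structure that the later parts exploit. For part~(a) I would introduce the \emph{twin relation} $\approx$ on $V(\Ga)$, declaring $u\approx v$ whenever $\Ga^+(u)=\Ga^+(v)$ and $\Ga^-(u)=\Ga^-(v)$. This is visibly an equivalence relation preserved by every automorphism, so $\Aut(\Ga)$ permutes its classes; since $\Ga$ is vertex-transitive, all classes lie in one orbit and hence share a common size $b$, and $R$-thickness forces $b\geqslant2$. Taking $\calP$ to be the partition into $\approx$-classes and $\Si:=\Ga/\calP$, I would verify two facts straight from the definition: no class contains an arc (an internal arc $x\to y$ with $x\approx y$ would force the loop $y\to y$, which is excluded), and between two distinct classes $B,B'$ either every vertex of $B$ points to every vertex of $B'$ or none does (a single arc $x\to y$ spreads over all of $B$ by equality of out-neighborhoods and over all of $B'$ by equality of in-neighborhoods). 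Fixing a bijection of each class with a fixed $b$-set then yields $\Ga\cong\Si[\overline\K_b]$, with $\Si$ loop-free precisely because classes carry no internal arcs.

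For part~(b) the key observation is that $R$-thickness supplies distinct twins $x\approx y$, and the transposition $\tau=(x\ y)$ is then an automorphism of $\Ga$ (swapping twins preserves all adjacencies, using that there is no loop and no arc between $x$ and $y$). Writing $\Ga=\Cay(G,S)$ with $\widehat G$ normal in $\Aut(\Ga)$, conjugation by $\tau$ stabilizes $\widehat G$ and so induces some $\theta\in\Aut(G)$ with $\tau\widehat g\tau^{-1}=\widehat{g^\theta}$ for all $g\in G$. The crux is a counting estimate: since $\tau$ moves only $x$ and $y$, the permutations $\tau\widehat g\tau^{-1}$ and $\widehat g$ can disagree only on the at most four points $\{x,y\}\cup\widehat g^{-1}(\{x,y\})$. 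Hence if $|G|\geqslant5$ they agree somewhere, and since distinct elements of a regular group agree at no point, we obtain $g^\theta=g$ for all $g$; that is, $\tau$ centralizes $\widehat G$. But the centralizer in $\Sym(V(\Ga))$ of the transitive group $\widehat G$ is semiregular, whereas $\tau$ fixes points (as $|G|>2$), a contradiction. Therefore $|V(\Ga)|=|G|\leqslant4$.

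For part~(c) I would feed the normal circulant $\Ga=\Cay(\ZZ_n,S)$ into part~(b) to get $n\leqslant4$, and then pin down $S$. Here $R$-thickness says $S+d=S$ for some nonzero $d$, so the stabilizer $\{g\in\ZZ_n\mid S+g=S\}$ is a nontrivial subgroup; it is also proper since $0\notin S\neq\varnothing$ rules out both $S=\varnothing$ and $S=\ZZ_n$. Thus $\ZZ_n$ admits a proper nontrivial subgroup with $n\leqslant4$, which forces $n=4$ and stabilizer $\{0,2\}$. The conditions $S+2=S$, $0\notin S$ and $S\neq\varnothing$ then leave only $S=\{1,3\}$, giving $\Ga\cong\C_4$.

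I expect the main obstacle to be part~(b): the right move is to manufacture the twin-transposition automorphism and then exploit the rigidity of the regular representation, and the delicate point is the estimate that $\tau\widehat g\tau^{-1}$ and $\widehat g$ differ on at most four points, from which the sharp bound $|G|\leqslant4$ (rather than some larger constant) falls out exactly.
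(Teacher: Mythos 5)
Your proposal is correct, and parts (a) and (c) run essentially as in the paper: the paper likewise partitions the vertex set into twin classes $B(v)$, uses vertex-transitivity to get a constant class size $b\geqslant2$, observes that classes carry no internal arcs and that arcs between classes come all-or-nothing, and in (c) combines $n=kb$ with $k,b\geqslant2$ and part (b) to force $n=4$ (your explicit determination of the connection set as $\{1,3\}$ in $\ZZ_4$ is in fact more detailed than the paper's terse final step). The genuine divergence is in part (b). The paper takes the twin pair to contain the identity vertex $1$, invokes the Godsil--Xu normalizer lemma (Lemma~\ref{prop3}) to write the twin transposition as $\tau=\overline{g}\alpha$ with $\alpha\in\Aut(G,S)$, and then picks $x,y$ with $x$, $y$ and $xy$ all outside the support of $\tau$ (possible precisely when $|G|\geqslant5$) so that applying the automorphism $\tau\overline{g}$ to the triple forces $g=1$, whence $\tau=\alpha$ fixes $1$, contradicting $1^\tau=w\neq1$. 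You use only the normality of $\widehat{G}$: conjugation by $\tau$ induces $\theta\in\Aut(G)$, your support count shows $\tau\widehat{g}\tau^{-1}$ and $\widehat{g}$ agree at some point once $|G|\geqslant5$, semiregularity of $\widehat{G}$ upgrades agreement at a point to $g^\theta=g$ for all $g$, and semiregularity of the centralizer of a transitive group yields the contradiction. Both arguments are rigidity statements about a transposition normalizing a regular group of degree at least $5$, and both extract the sharp bound $4$ from essentially the same count of at most four ``bad'' points; yours is more self-contained, needing only normality plus two standard permutation-group facts rather than the explicit $\widehat{G}\rtimes\Aut(G,S)$ structure, while the paper's is a shorter algebraic computation once Lemma~\ref{prop3} is available. (A small remark: your closing appeal to the semiregular-centralizer fact could even be avoided by evaluating $\tau\widehat{g}=\widehat{g}\tau$ at a point sent to $x$ by $\widehat{g}$, but this is a matter of taste, not a gap.)
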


\begin{proof}
Let $\Ga=(V,A)$, and let $G=\Aut(\Ga)$.
For $v\in V$, let
\[
B(v)=\{u\in V \mid \Ga^+(u)=\Ga^+(v),\ \Ga^-(u)=\Ga^-(v)\}.
\]
Then the sets $B(v)$ with $v\in V$ form a $G$-invariant partition $\calP=\{B_1,B_2,\dots,B_k\}$ of $V$.
Since $G$ is transitive on $V$ and $\Ga$ is $R$-thick, we have $|B_1|=|B_2|=\dots=|B_k|=b$ for some integer $b\geqslant2$.
Since $\Ga$ has no loop, the $R$-thickness of $\Ga$ implies that there is no arc between vertices in the same part $B_i$ for any $i$.
For $i,j\in\{1,2,\dots,k\}$, if there is an arc $(u,v)$ of $\Ga$ with $u\in B_i$ and $v\in B_j$, then as $\Ga$ is $R$-thick, each vertex in $B_i$ points to all vertices of $B_j$.
This shows that $\Ga\cong\Si[\overline\K_b]$, where $\Si=\Ga/\calP$, proving part~(a).

To prove part~(b), suppose for a contradiction that $\Ga$ is a normal Cayley digraph of a group $G$ with $|G|\geqslant5$.
Take $w\in G$ such that $1$ and $w$ are distinct vertices in the same part of the partition $\calP$.
Then the transposition $\tau$ swapping $1$ and $w$ is an automorphism of $\Ga$.
Take $x\in G\setminus\{1,w\}$.
As
\[
|x(G\setminus\{1,w\})|=|G\setminus\{1,w\}|=|G|-2\geqslant3,
\]
there exists $y\in G\setminus\{1,w\}$ such that $xy\notin\{1,w\}$.
Then $x$, $y$ and $xy$ are all fixed by $\tau$.
Since $\Ga$ is a normal Cayley digraph of $G$, Lemma~\ref{prop3} implies that the automorphism $\tau$ of $\Ga$ can be written as $\tau=\overline{g}\alpha$, where $\overline{g}$ is the right multiplication by $g\in G$ and $\alpha\in\Aut(G)$.
It follows that
\[
\tau\overline{g}=\tau^{-1}\overline{g}=\alpha^{-1}\in\Aut(G),
\]
and hence $(xy)^{\tau\overline{g}}=x^{\tau\overline{g}}y^{\tau\overline{g}}$.
However,
\[
(xy)^{\tau\overline{g}}=(xy)^{\overline{g}}=xyg,\quad x^{\tau\overline{g}}=x^{\overline{g}}=xg\quad\text{and}\quad y^{\tau\overline{g}}=y^{\overline{g}}=yg,
\]
which yields $xyg=(xg)(yg)$, or equivalently, $g=1$.
This implies that $\tau=\alpha$ and so
\[
w=1^\tau=1^\alpha=1,
\]
a contradiction. Thus part~(b) holds.

Finally, assume that $\Ga$ is a normal circulant of order $n$ with nonempty arc set.
Then $n=kb$ with $b\geqslant2$.
If $k=1$, then $\Ga\cong\overline\K_b$ has an empty arc set, a contradiction.
Thus $k\geqslant2$ and hence $n=kb\geqslant4$.
This together with part~(b) gives $n=4$.
As $\Ga$ is an $R$-thick normal circulant with no loop, we then conclude that $\Ga\cong\C_4$, as part~(c) asserts.
\end{proof}

\begin{lemma}\label{lem13}
Let $\Ga_1$ and $\Ga_2$ be $R$-thin digraphs with no loops and let $b_1$ and $b_2$ be positive integers such that $\Ga_1[\overline\K_{b_1}]\cong\Ga_2[\overline\K_{b_2}]$. Then $\Ga_1\cong\Ga_2$ and $b_1=b_2$.
\end{lemma}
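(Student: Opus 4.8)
The plan is to reconstruct each factor $\Ga_i$ and each exponent $b_i$ intrinsically from the product $\Ga_i[\overline\K_{b_i}]$, using the $R$-thickness relation itself as an isomorphism invariant. For any digraph $\Lambda=(V,A)$ and $v\in V$ put $B_\Lambda(v)=\{u\in V\mid\Lambda^+(u)=\Lambda^+(v)\ \text{and}\ \Lambda^-(u)=\Lambda^-(v)\}$, as in the proof of Lemma~\ref{lem5}; these sets form a partition $\calP_\Lambda$ of $V$. This partition is defined purely in terms of the arc relation, so any digraph isomorphism $\phi\colon\Lambda_1\to\Lambda_2$ maps the parts of $\calP_{\Lambda_1}$ bijectively onto the parts of $\calP_{\Lambda_2}$, preserving their sizes, and induces an isomorphism between the quotient digraphs $\Lambda_1/\calP_{\Lambda_1}$ and $\Lambda_2/\calP_{\Lambda_2}$.

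The crux is to compute $\calP_\Lambda$ for $\Lambda=\Ga[\overline\K_b]$ when $\Ga$ is $R$-thin and loopless. Write the vertex set of $\Lambda$ as $V(\Ga)\times W$ with $|W|=b$. By the definition of the lexicographic product and the absence of loops in $\Ga$, a vertex $(v,w)$ satisfies $\Lambda^+((v,w))=\Ga^+(v)\times W$ and $\Lambda^-((v,w))=\Ga^-(v)\times W$, with no dependence on $w$; in particular the fibre $\{v\}\times W$ lies inside a single part of $\calP_\Lambda$. Conversely, if $(u,w)$ and $(v,w')$ lie in the same part then $\Ga^+(u)\times W=\Ga^+(v)\times W$ and $\Ga^-(u)\times W=\Ga^-(v)\times W$, whence $\Ga^+(u)=\Ga^+(v)$ and $\Ga^-(u)=\Ga^-(v)$; as $\Ga$ is $R$-thin, this forces $u=v$. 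Hence the parts of $\calP_\Lambda$ are exactly the fibres $\{v\}\times W$, each of size $b$, and the quotient $\Lambda/\calP_\Lambda$ is isomorphic to $\Ga$ via $\{v\}\times W\mapsto v$, since there is an arc from $\{u\}\times W$ to $\{v\}\times W$ precisely when $u$ points to $v$ in $\Ga$.

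With these two observations the lemma follows at once. Let $\phi\colon\Ga_1[\overline\K_{b_1}]\to\Ga_2[\overline\K_{b_2}]$ be an isomorphism. By the second paragraph every part on the left has size $b_1$ and every part on the right has size $b_2$; since $\phi$ maps parts bijectively to parts of the same cardinality, $b_1=b_2$. By the first paragraph $\phi$ induces an isomorphism of the quotients, and by the second paragraph these quotients are $\Ga_1$ and $\Ga_2$, so $\Ga_1\cong\Ga_2$.

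I expect the one point requiring care to be the converse inclusion in the computation of $\calP_\Lambda$: the claim that distinct vertices of $\Ga_i$ give distinct parts is exactly where $R$-thinness of both $\Ga_1$ and $\Ga_2$ is indispensable, for otherwise several fibres could coalesce into one part and the recovery of $\Ga_i$ and $b_i$ would break down. The loopless hypothesis enters only to make the neighbourhood identities $\Lambda^{+}((v,w))=\Ga^{+}(v)\times W$ and $\Lambda^{-}((v,w))=\Ga^{-}(v)\times W$ transparent and to ensure the quotient records $\Ga$ faithfully, and I will confirm that these identities hold directly from the definition of the lexicographic product.
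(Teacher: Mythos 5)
Your proof is correct and takes essentially the same approach as the paper's: both recover the fibres $\{v\}\times V(\overline\K_{b_i})$ as the classes of the intrinsic equivalence relation ``equal in- and out-neighbourhoods,'' use $R$-thinness to show distinct fibres cannot coalesce, and then read off $b_i$ as the common class size and $\Ga_i$ as the quotient digraph. The only cosmetic difference is that you spell out the isomorphism-invariance of this partition, which the paper leaves implicit when passing from $\Ga_1[\overline\K_{b_1}]\cong\Ga_2[\overline\K_{b_2}]$ to $b_1=b_2$ and $\Ga_1\cong\Ga_2$.
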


\begin{proof}
For $i=1,2$, define a binary relation $R_i$ on $V(\Ga_i[\overline\K_{b_i}])$ by letting $(g,h)\in R_i$ if and only if
\[
(\Ga_i[\overline\K_{b_i}])^+(g)=(\Ga_i[\overline\K_{b_i}])^+(h)\quad\text{and}\quad
(\Ga_i[\overline\K_{b_i}])^-(g)=(\Ga_i[\overline\K_{b_i}])^-(h),
\]
where $g$ and $h$ are arbitrary vertices of $\Ga_i[\overline\K_{b_i}]$. Then $R_i$ is an equivalence relation and so can be viewed as a partition on $V(\Ga_i[\overline\K_{b_i}])$. Write $g=(u,x)$ and $h=(v,y)$ with $u,v\in V(\Ga_i)$ and $x,y\in V(\overline\K_{b_i})$. It is clear that
\[
(\Ga_i[\overline\K_{b_i}])^\varepsilon(g)=\Ga_i^\varepsilon(u)\times V(\overline\K_{b_i})\quad\text{and}\quad
(\Ga_i[\overline\K_{b_i}])^\varepsilon(h)=\Ga_i^\varepsilon(v)\times V(\overline\K_{b_i})
\]
for $\varepsilon=+,-$. Therefore, $(g,h)\in R_i$ if and only if $\Ga_i^+(u)=\Ga_i^+(v)$ and $\Ga_i^-(u)=\Ga_i^-(v)$. Since $\Ga_i$ is $R$-thin and has no loop, this shows that $(g,h)\in R_i$ if and only if $u=v$. Hence the equivalence classes of $R_i$ are $\{u\}\times V(\overline\K_{b_i})$ with $u$ running over $V(\Ga_i)$. In particular, each equivalence class of $R_i$ has size $b_i$ and $(\Ga_i[\overline\K_{b_i}])/R_i\cong\Ga_i$. As $\Ga_1[\overline\K_{b_1}]\cong\Ga_2[\overline\K_{b_2}]$, we then derive that $b_1=b_2$ and
\[
\Ga_1\cong(\Ga_1[\overline\K_{b_1}])/R_1\cong(\Ga_2[\overline\K_{b_2}])/R_2\cong\Ga_2.
\]
The proof is thus completed.
\end{proof}

\begin{lemma}\label{lem14}
Let $\Ga_0$ and $\Si_0$ be normal circulants and let $n_1,\dots,n_r,m_1,\dots,m_s$ be integers greater than $3$ such that $|V(\Ga_0)|,n_1,\dots,n_r$ are pairwise coprime, $|V(\Si_0)|,m_1,\dots,m_s$ are pairwise coprime and $\Aut(\Ga_0)\times\Sy_{n_1}\times\dots\times\Sy_{n_r}\cong\Aut(\Si_0)\times\Sy_{m_1}\times\dots\times\Sy_{m_s}$. Then $r=s$ and $\{n_1,\dots,n_r\}=\{m_1,\dots,m_s\}$.
\end{lemma}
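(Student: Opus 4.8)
The plan is to separate the nonsolvable symmetric factors $\Sy_{n_i}$ with $n_i\geqslant5$ from the ``solvable part'' of each side, which consists of $\Aut(\Ga_0)$ together with a possible factor $\Sy_4$, by means of characteristic subgroups that the abstract isomorphism must preserve. The starting observation is a structural fact about automorphism groups of normal circulants: if $\Ga_0=\Cay(G,S)$ is a normal circulant of order $n_0$, then $\Aut(\Ga_0)=\widehat G\rtimes\Aut(G,S)$ by Lemma~\ref{prop3}, and since $G$ is cyclic the group $\Aut(G)\geqslant\Aut(G,S)$ is abelian; hence $\Aut(\Ga_0)/\widehat G$ is abelian and the derived subgroup $\Aut(\Ga_0)'\leqslant\widehat G$ is cyclic. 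In particular $\Aut(\Ga_0)$ is solvable, and the same holds for $\Aut(\Si_0)$.

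Write $L=\Aut(\Ga_0)\times\Sy_{n_1}\times\dots\times\Sy_{n_r}$ and $R=\Aut(\Si_0)\times\Sy_{m_1}\times\dots\times\Sy_{m_s}$, and fix an isomorphism $L\to R$. The solvable radical and the quotient by it are preserved by this isomorphism. Using $\Rad(\Sy_n)=1$ for $n\geqslant5$ and $\Rad(\Sy_4)=\Sy_4$ (as $\Sy_4$ is solvable), and using that the $n_i$ are pairwise coprime so at most one of them equals $4$, I obtain
\[
\Rad(L)=\Aut(\Ga_0)\times(\Sy_4)^{\epsilon},\qquad L/\Rad(L)\cong\prod_{n_i\geqslant5}\Sy_{n_i},
\]
where $\epsilon\in\{0,1\}$ records whether $4\in\{n_1,\dots,n_r\}$, and similarly for $R$ with $\eta\in\{0,1\}$.

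To match the indices $\geqslant5$ I would compare socles: $\Soc\bigl(\prod_{n_i\geqslant5}\Sy_{n_i}\bigr)=\prod_{n_i\geqslant5}A_{n_i}$ is a direct product of nonabelian simple groups, whose simple direct factors are determined up to isomorphism; since $A_n\cong A_m$ forces $n=m$ for $n,m\geqslant5$ and the $n_i$ are distinct, the isomorphism $L/\Rad(L)\cong R/\Rad(R)$ gives $\{n_i:n_i\geqslant5\}=\{m_j:m_j\geqslant5\}$. To match the factor $4$, I pass to derived subgroups of the radicals: $\Rad(L)'=\Aut(\Ga_0)'\times(A_4)^{\epsilon}$, which is abelian precisely when $\epsilon=0$, because $\Aut(\Ga_0)'$ is cyclic while $A_4$ is nonabelian; likewise $\Rad(R)'$ is abelian precisely when $\eta=0$. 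As $\Rad(L)'\cong\Rad(R)'$, this forces $\epsilon=\eta$, that is, $4\in\{n_i\}\iff4\in\{m_j\}$. Combining the two cases yields $\{n_1,\dots,n_r\}=\{m_1,\dots,m_s\}$, and since pairwise coprime integers greater than $3$ are distinct, $r=s$ follows.

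The step I expect to be the main obstacle is detecting the factor $\Sy_4$: being solvable, it contributes no nonabelian composition factor and is invisible to the socle argument that handles the indices $\geqslant5$. The hypothesis $n_i>3$ is exactly what makes the derived-subgroup trick succeed, since $\Sy_4'=A_4$ is nonabelian whereas $\Sy_3'=A_3$ is cyclic and would be absorbed indistinguishably into $\Aut(\Ga_0)'$; this is also the structural reason that $\K_3$ is excluded from the factors $\K_{n_i}$ and folded into $\Ga_0$.
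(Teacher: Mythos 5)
Your proof is correct, and it takes a genuinely different route from the paper's. Your invariant-based steps all check out: the solvable radical of a direct product is the product of the radicals, $\Aut(\Ga_0)=\widehat{G}\rtimes\Aut(G,S)$ is metabelian with cyclic derived subgroup by Lemma~\ref{prop3} (valid for any normal circulant, no connectivity needed), coprimality ensures at most one $n_i$ equals $4$, the group $\prod_{n_i\geqslant5}\Sy_{n_i}$ is centerless so every minimal normal subgroup lies in a factor and the socle is $\prod_{n_i\geqslant5}\A_{n_i}$ with simple factors determined up to isomorphism, and the derived-subgroup test ($\Rad(L)'$ abelian if and only if no $\Sy_4$ factor, since $\Aut(\Ga_0)'$ is cyclic while $\A_4$ is nonabelian) correctly detects the factor $4$. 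The paper instead argues factor by factor: for each $i$ it transports the normal factor $\Sy_{n_i}$ of one side to a normal subgroup $X$ of the other, notes that $X\cap H_s$ is normal in both $X\cong\Sy_{n_i}$ and $H_s\cong\Sy_{m_s}$, deduces from the normal subgroup lattices of symmetric groups of degree greater than $3$ that either $n_i=m_s$ or $X\cap H_s=1$, and in the latter case passes to $H/H_s$ and inducts; the induction terminates because $\Aut(\Si_0)$, being an extension of a cyclic group by an abelian group by Lemma~\ref{prop3}, has no subgroup isomorphic to $\Sy_{n_i}$ with $n_i>3$. So both proofs rest on the same two inputs --- Lemma~\ref{prop3} and the hypothesis $n_i,m_j>3$ --- but deploy them differently: the paper's inductive factor-matching treats $n_i=4$ and $n_i\geqslant5$ uniformly and needs only the normal subgroup structure of $\Sy_n$, whereas your characteristic-subgroup argument ($\Rad$, $\Soc$, derived subgroup) avoids the induction and the slightly delicate common-normal-subgroup step, at the cost of a case split to catch the solvable factor $\Sy_4$; a side benefit of your version is that it isolates exactly why $n_i>3$ is indispensable ($\Sy_3'=\A_3$ is cyclic and would be absorbed into $\Aut(\Ga_0)'$), which is indeed the structural reason the paper folds triangle-type factors into $\Ga_0$.
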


\begin{proof}
Take an arbitrary $i\in\{1,\dots,r\}$. Let $G=G_0\times G_1\times\dots\times G_r$ with $G_0\cong\Aut(\Ga_0)$ and $G_i\cong\Sy_{n_i}$ for $i=1,\dots,r$ and $H=H_0\times H_1\times\dots\times H_s$ with $H_0\cong\Aut(\Si_0)$ and $H_j\cong\Sy_{m_j}$ for $j=1,\dots,s$. Since $G_i$ is normal in $G$ and $G\cong H$, there exists a normal subgroup $X$ of $H$ such that $X\cong\Sy_{n_i}$. It follows that $X\cap H_s$ is a normal subgroup of both $X\cong\Sy_{n_i}$ and $H_s\cong\Sy_{m_s}$. As $n_i$ and $m_s$ are both greater than $3$, we then deduce that either $n_i=m_s$ or $X\cap H_s=1$. If $X\cap H_s=1$, then $H/H_s\cong H_0\times H_1\times\dots\times H_{s-1}$ has a normal subgroup $XH_s/H_s\cong X\cong\Sy_{n_i}$. Applying the above argument inductively we obtain that either $n_i\in\{m_1,\dots,m_s\}$ or $\Aut(\Si_0)$ has a normal subgroup isomorphic to $\Sy_{n_i}$. Suppose that the latter occurs. Since $\Si_0$ is a normal circulant, Lemma~\ref{prop3} shows that $\Aut(\Si_0)$ is an extension of a cyclic group by an abelian group and hence every subgroup of $\Aut(\Si_0)$ is an extension of a cyclic group by an abelian group. However, $\Sy_{n_i}$ cannot be such an extension as $n_i>3$, a contradiction. Thus $n_i\in\{m_1,\dots,m_s\}$. Since $i$ is an arbitrary element of $\{1,\dots,r\}$, we then have $\{n_1,\dots,n_r\}\subseteq\{m_1,\dots,m_s\}$ and similarly $\{m_1,\dots,m_s\}\subseteq\{n_1,\dots,n_r\}$. This shows that $r=s$ and $\{n_1,\dots,n_r\}=\{m_1,\dots,m_s\}$, as desired.
\end{proof}

We are now in a position to prove Theorem~\ref{thm1}.

\begin{proof}[Proof of Theorem~$\ref{thm1}$] By Proposition~\ref{Structural-thm}, we only need to prove the uniqueness of tensor-lexicographic decompositions. Without loss of generality, assume $|V(\Ga)|>1$. Let $(\Gamma_0,N,b)$ and $(\Sigma_0,M,c)$ be tensor-lexicographic decompositions of $\Gamma$, where $N=\{n_1,\dots,n_r\}$ and $M=\{m_1,\dots,m_s\}$. Then $|V(\Ga_0)|,n_1,\dots,n_r$ are pairwise coprime, and $|V(\Si_0)|,m_1,\dots,m_s$ are pairwise coprime. Let $\Ga_1=\Ga_0\times \K_{n_1}\times\dots\times \K_{n_r}$ and $\Ga_2=\Si_0\times \K_{m_1}\times\dots\times \K_{m_s}$. Then
\[
\Ga\cong\Ga_1[\overline\K_b]\cong\Ga_2[\overline\K_c],
\]
and $\Ga_1$ and $\Ga_2$ are both arc-transitive with $\Aut(\Ga_1)=\Aut(\Ga_0)\times\Sy_{n_1}\times\dots\times\Sy_{n_r}$ and $\Aut(\Ga_2)=\Aut(\Si_0)\times\Sy_{m_1}\times\dots\times\Sy_{m_s}$. It follows from Lemma~\ref{lem5}(c) that $\Ga_0$ and $\Si_0$ are $R$-thin. Hence $\Ga_1=\Ga_0\times \K_{n_1}\times\dots\times \K_{n_r}$ and $\Ga_2=\Si_0\times \K_{m_1}\times\dots\times \K_{m_s}$ are $R$-thin. Since $\Ga$ is connected, neither $|V(\Ga_1)|$ nor $|V(\Ga_2)|$ is equal to $1$. Hence neither $\Ga_1$ nor $\Ga_2$ has a loop. By Lemma~\ref{lem13} we derive from $\Ga_1[\overline\K_b]\cong\Ga_2[\overline\K_c]$ that $\Ga_1\cong\Ga_2$ and $b=c$. Then by Lemma~\ref{lem14} we deduce that $N=M$. To complete the proof, it remains to show $\Ga_0\cong\Si_0$. Since $b=c$ and $N=M$, we have
\[
|V(\Ga_0)|=\frac{|V(\Ga_1)|}{n_1\dots,n_r}=\frac{|V(\Ga)|}{n_1\dots,n_rb}
=\frac{|V(\Ga)|}{m_1\dots,m_sc}=\frac{|V(\Ga_2)|}{m_1\dots,m_s}=|V(\Si_0)|.
\]
Let $n_0=|V(\Ga_0)|=|V(\Si_0)|$. Then there exist generating subsets $S$ and $T$ of $\ZZ_{n_0}$ such that $\Ga_0=\Cay(\ZZ_{n_0},S)$ and $\Si_0=\Cay(\ZZ_{n_0},T)$. It follows that
\begin{align*}
&\Cay(\ZZ_{n_0}\times\ZZ_{n_1}\times\dots\times\ZZ_{n_r},S\times(\ZZ_{n_1}\setminus\{0\})\times\dots\times(\ZZ_{n_r}\setminus\{0\}))\\
\cong&\Cay(\ZZ_{n_0},S)\times\Cay(\ZZ_{n_1},\ZZ_{n_1}\setminus\{0\})\times\dots\times\Cay(\ZZ_{n_r},\ZZ_{n_r}\setminus\{0\})\\
\cong&\Ga_0\times \K_{n_1}\times\dots\times \K_{n_r}\\
=&\Ga_1
\end{align*}
is a connected arc-transitive circulant and thus a CI-digraph by Theorem~\ref{thm3}, and
\begin{align*}
&\Cay(\ZZ_{n_0}\times\ZZ_{n_1}\times\dots\times\ZZ_{n_r},T\times(\ZZ_{n_1}\setminus\{0\})\times\dots\times(\ZZ_{n_r}\setminus\{0\}))\\
\cong&\Cay(\ZZ_{n_0},T)\times\Cay(\ZZ_{n_1},\ZZ_{n_1}\setminus\{0\})\times\dots\times\Cay(\ZZ_{n_r},\ZZ_{n_r}\setminus\{0\})\\
\cong&\Si_0\times \K_{n_1}\times\dots\times \K_{n_r}\\
=&\Si_0\times \K_{m_1}\times\dots\times \K_{m_s}\\
=&\Ga_2.
\end{align*}
Thereby we obtain
\begin{align*}
&\Cay(\ZZ_{n_0}\times\ZZ_{n_1}\times\dots\times\ZZ_{n_r},S\times(\ZZ_{n_1}\setminus\{0\})\times\dots\times(\ZZ_{n_r}\setminus\{0\}))\\
\cong&\Ga_1\cong\Ga_2
\cong\Cay(\ZZ_{n_0}\times\ZZ_{n_1}\times\dots\times\ZZ_{n_r},T\times(\ZZ_{n_1}\setminus\{0\})\times\dots\times(\ZZ_{n_r}\setminus\{0\})),
\end{align*}
whence there exists $\varphi\in\Aut(\ZZ_{n_0}\times\ZZ_{n_1}\times\dots\times\ZZ_{n_r})$ such that
\[
(S\times(\ZZ_{n_1}\setminus\{0\})\times\dots\times(\ZZ_{n_r}\setminus\{0\}))^\varphi
=T\times(\ZZ_{n_1}\setminus\{0\})\times\dots\times(\ZZ_{n_r}\setminus\{0\}).
\]
As $n_0,n_1,\dots,n_r$ are pairwise coprime, we conclude that $S^{\varphi_0}=T$ for some $\varphi_0\in\Aut(\ZZ_{n_0})$. This implies that $\Ga_0=\Cay(\ZZ_{n_0},S)\cong\Cay(\ZZ_{n_0},T)=\Si_0$, as desired.
\end{proof}

\vskip0.1in
\noindent\textit{Acknowledgements.} This project was initiated during the first named author's visit to the University of Melbourne, and was partially supported by National Natural Science Foundation of China (NNSFC~11771200 and~11931005). The authors would like to thank the anonymous referee for careful reading and valuable suggestions on this paper.

\end{document}